\theoremstyle{plain}
\newtheorem{conjecture}[subsection]{Conjecture}
\theoremstyle{definition} 
\newtheorem{theorem}[subsection]{Theorem}
\newtheorem{definition }[subsection]{Definition}
\newtheorem{lemma}[subsection]{Lemma}
\newtheorem{example}[subsection]{Example}
\newtheorem{remark}[subsection]{Remark}
\newtheorem{notation}[subsection]{Notation}
\newcommand{\R}{{\mathbb R}}  %%%%
\newcommand{\N}{{\mathbb N}}  %%%%
\newcommand{\Def}[1]{\textbf{\boldmath{#1}}}
\DeclareMathOperator*{\colim}{colim}
\DeclareMathOperator*{\bast}{\divideontimes}
\DeclareMathOperator*{\bbast}{\overline{\divideontimes}}
\begin{document}
	\title{Homeomorphic Model for the Polyhedral Smash Product of Disks and Spheres}															%%%%
	\author{Arnaud Ngopnang Ngompe}
	\address{University of Regina, 3737 Wascana Pkwy, Regina, SK S4S 0A2, Canada}
	\email{\url{ann037@uregina.ca}}
	
	\begin{abstract} In this paper we present unpublished work by David Stone on polyhedral smash products. He proved that the polyhedral smash product of the CW-pair $(D^2, S^1)$ over a simplicial complex $K$ is homeomorphic to an iterated suspension of the geometric realization of $K$. Here we generalize his technique to the CW-pair $(D^{k+1}, S^{k})$, for an arbitrary $k$.  We generalize the result further to a set of disks and spheres of different dimensions.
	\end{abstract}
	
	\keywords{polyhedral product, smash product, simplicial complex, geometric realization, join}
		
	\subjclass[2020]{Primary 55U10; Secondary 57Q05}

	\maketitle
	
	\tableofcontents
	
	\section{Introduction}
	
	In all the following, $m\in \N$ is any natural number and $[m]=\{1,\cdots,m\}$. Also, we set $K$ to be an abstract simplicial complex whose vertex set is contained in $[m]$, that is $K$ is a family of subsets $\sigma\subseteq [m]$, called simplices, such that whenever $\sigma\in K$ and $\tau\subseteq \sigma$, then $\tau \in K$.
	
	\begin{definition }[Polyhedral smash product]\cite[\textbf{Construction~8.3.1}]{bp2} Let $(\underline{X},\underline{A})=\{(X_i,A_i)\}_{i\in [m]}$ be a family of pointed CW-pairs, that is, the $X_i$ are CW-complexes and $A_i\hookrightarrow X_i$ are subcomplexes, for all $i\in [m]$. The \Def{polyhedral smash product} of $(\underline{X},\underline{A})$ over $K$, denoted $\widehat{Z}(K; (\underline{X},\underline{A}))\subseteq \bigwedge_{i=1}^m X_i$, is the space given by
		\[\widehat{Z}(K; (\underline{X},\underline{A}))=\bigcup_{\sigma\in K}\widehat{D}(\sigma),\]
		
	\begin{equation}\label{dhat}
			\text{where }\widehat{D}(\sigma)=\bigwedge_{i=1}^m Y_i \quad\text{with}\ Y_i=\begin{cases}X_i\ \text{if}\ i\in \sigma,\\
			A_i\ \text{otherwise}
		\end{cases},\quad \forall \sigma\in K.
	\end{equation}
Using categorical language, consider $\textsc{Cat}(K)$ to be the face category of $K$, that is, objects are simplices and morphisms are inclusions. Define the $\textsc{Cat}(K)$-diagram given by
\begin{align}\label{dcat}
	\widehat{D}: \textsc{Cat}(K) & \to \text{Top} \nonumber\\
	\sigma  & \mapsto \widehat{D}(\sigma),
\end{align}
where $\widehat{D}(\sigma)$ is given by $(\ref{dhat})$ and the functor $\widehat{D}$ maps the morphism $\rho\subseteq \sigma$ to the inclusion $\widehat{D}(\rho)\subseteq \widehat{D}(\sigma)$. Then
\begin{equation}\label{psp}
	\widehat{Z}(K; (\underline{X},\underline{A})) = \colim_{\sigma \in K} \widehat{D}(\sigma).
\end{equation}
	\end{definition }
	
	 Below we recall some well-known operations on spaces.
	\begin{definition }\cite[\textbf{\S 0}]{AH}  \text{\ }
	For $n\in \N$, let $(X,x_0)$ and $(Y,y_0)$ be two pointed topological spaces.
		\begin{itemize}
			\item The \Def{join} $X\ast Y$ of $X$ and $Y$ is the quotient space defined by $X\ast Y=X\times Y\times I/\sim,$ where $I=[0,1]$ and $\sim$ is the equivalence relation generated by
			\begin{align*}
				(x,y,0) & \sim (x,y',0),\ \forall x\in X\ \text{and}\ \forall y,y'\in Y,\\
				(x,y,1) & \sim (x',y,1),\ \forall x,x'\in X\ \text{and}\ \forall y\in Y.
			\end{align*}
			\item The \Def{wedge sum} $X\vee Y$ of $X$ and $Y$ is the quotient space defined by $X\vee Y= X\amalg Y/(x_0\sim y_0)$.
			\item The \Def{smash product} $X\wedge Y$ of $X$ and $Y$ is the quotient space defined by $X\wedge Y=X\times Y/X\vee Y$.
			\item The \Def{(unreduced) suspension} $\Sigma X$ of $X$ is the space defined by $\Sigma X =S^0 \ast X$, where $S^0$ denotes the $0$-sphere.
			\item The \Def{(unreduced) cone} $CX$ of $X$ is the space defined by $CX=c\ast X$, where $c$ is a single point.
		\end{itemize}
	\end{definition }
	
%	\begin{notation}
%		In all the following we denote by $|K|$ the geometric realization of the simplicial complex $K$. Also, $S^0=\{s_1,s_2\}$ denotes the $0$-sphere in $\R^n$, $c$ denotes the origin of $\R^n$ and $I=[0,1]$.
%	\end{notation}
	David stone made the following conjecture.
	\begin{conjecture}\label{conj}
		If $F$ is a compact subspace of $\R^n$, then there is a homeomorphism \begin{equation*}
			\widehat{Z}(K; (c\ast \Sigma F,\Sigma F))\cong \Sigma \left( \bast^m F\right) \ast|K|, 
		\end{equation*}
		where $\bast^m F$ is defined as the $m$-fold join of $m$ copies of $F$.
	\end{conjecture}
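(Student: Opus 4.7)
I would approach this via an explicit coordinate model, generalising the technique the paper uses for $(D^{k+1},S^k)$. The conjecture indeed reduces to the disk--sphere case when $F=S^{k-1}$: then $\Sigma S^{k-1}=S^k$, $c\ast S^k=D^{k+1}$ and $\bast^m S^{k-1}=S^{mk-1}$, so $\Sigma(\bast^m F)\ast|K|=S^{mk}\ast|K|$, with Stone's original result being the further specialisation $k=1$. My plan is to replay the disk--sphere proof with a general compact $F\subseteq\R^n$ in place of $S^{k-1}$, using only the fact that $F$ is compact.

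First I fix concrete compact models. Represent $\Sigma F$ by pairs $[t,x]$ with $t\in[-1,1]$, $x\in F$, subject to $[\pm 1,x]=[\pm 1,x']$, and $C\Sigma F=c\ast\Sigma F$ by triples $[r,t,x]$ with $r\in[0,1]$, the cone point $r=0$ serving as basepoint, with the inclusion $\Sigma F\hookrightarrow C\Sigma F$ given by $[t,x]\mapsto[1,t,x]$. A non-basepoint of $\widehat Z(K;(C\Sigma F,\Sigma F))$ then admits a tuple representative $(r_i,t_i,x_i)_{i=1}^m$ with $r_i\in(0,1]$, each $(t_i,x_i)\in\Sigma F$ non-basepoint, and the simplicial condition $\{i:r_i<1\}\in K$. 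In parallel I parametrise the target: a point of $\Sigma(\bast^m F)\ast|K|$ is $[s;u,(\mu_i,x_i)_{i=1}^m;(\lambda_i)_{i=1}^m]$, with $s\in[0,1]$ the outer join parameter, $u\in[-1,1]$ the suspension parameter, $(\mu_i)\in\Delta^{m-1}$ the iterated-join weights (with $x_i$ forgotten when $\mu_i=0$), and $(\lambda_i)$ a point of $|K|$, together with the obvious collapses at $s\in\{0,1\}$, $u=\pm1$ and $\mu_i=0$.

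I would then write down an explicit candidate homeomorphism of the form $\lambda_i\propto(1-r_i)$ (so $i\in\sigma$ iff $\lambda_i>0$), $s$ a normalising function of $\sum_i(1-r_i)$, $\mu_i$ built from $r_i$ and $|t_i|$, and $u$ a weighted signed average of the $t_i$'s. The normalisations are to be tuned so that the three families of collapse on the left---the cone point of $C\Sigma F$ at $r_i=0$, the tips of $\Sigma F$ at $t_i=\pm 1$, and the smash basepoint itself---match exactly the three on the right: the join ends $s\in\{0,1\}$, the suspension tips $u=\pm1$, and the $\mu_i=0$ relations, while the $|K|$-simplicial relations are handled automatically by the condition $\{i:r_i<1\}\in K$.

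The principal obstacle is precisely this matching of collapses: one must design the formulas so that every boundary identification on the left is induced by exactly one on the right, with no over- or under-collapsing, and the smash basepoint maps to a single well-chosen point (for instance a tip of the outer suspension). Once the map is well-defined, injective and continuous, compactness finishes the job, since both sides are compact Hausdorff ($F$ compact and $K$ finite), so a continuous bijection between them is automatically a homeomorphism. A useful sanity check throughout is to restrict to $F=S^{k-1}$ and verify that the formulas recover the homeomorphism already established in the paper for the pair $(D^{k+1},S^k)$.
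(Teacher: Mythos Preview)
The paper does \emph{not} prove this statement: Conjecture~\ref{conj} is presented as an open conjecture of David Stone, and the paper establishes only the special cases $F=S^{k-1}$ (Theorem~\ref{main}) and the mixed-dimension version (Theorem~\ref{gen}). There is therefore no proof in the paper against which to compare your proposal for general compact $F$.

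As for the proposal itself, what you have written is a plan rather than a proof. You announce the shape of the map---$\lambda_i\propto(1-r_i)$, $s$ some function of $\sum_i(1-r_i)$, $\mu_i$ ``built from $r_i$ and $|t_i|$'', $u$ a ``weighted signed average of the $t_i$'s''---but you never write the formulas down, and you explicitly flag the matching of collapses as ``the principal obstacle'' without resolving it. That obstacle is real: for instance, when different coordinates sit at opposite suspension tips ($t_i=+1$, $t_j=-1$), a signed average of the $t_i$ lands strictly between $\pm1$, so it is not clear which identifications on the target absorb this; conversely, the smash basepoint on the left (any $r_i=0$) must map to a single point, while on the right the natural candidates ($s=1$, or $u=\pm1$ at $s=0$) interact with the remaining, still-varying coordinates. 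Until the formulas are specified and these boundary cases are checked, the argument is incomplete.

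For comparison, in the sphere case the paper avoids this coordinate bookkeeping entirely. It embeds each factor as a geometric $k$-simplex $\Delta_i\subset\Delta^{(k+1)m-1}$, realises $|K|$ as the subcomplex $|A(K)|=\bigcup_\sigma a_\sigma$ of barycentres, and uses the geometric-join identity $\Delta_\sigma=a_\sigma\,\overline{\ast}\,S_\sigma$ together with the explicit homeomorphisms $\Psi\colon C\Delta^{n-1}\to C^n$ and $\overline{\Psi}\colon\Sigma\Delta^{n-1}\to\widetilde{D}^n$. All the collapses are then handled by a single quotient, namely collapsing $\partial_+C^n$. This machinery depends on $F=\partial\Delta^k$ being the boundary of a simplex sitting inside a larger simplex; for an arbitrary compact $F\subset\R^n$ there is no evident analogue, which is presumably why the general statement remains a conjecture.
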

	
	As it is mentioned in \cite[\textbf{Remark~2.20}]{bbcg1}, David Stone used a kind of geometrical argument to prove a particular case of his conjecture by taking $F=S^0$. Hence he proved the following.
	\begin{theorem} \cite{david}\label{stone} There is a homeomorphism
	\begin{equation*}
			\widehat{Z}(K; (D^{2},S^1))\cong \Sigma^{m+1} |K|.
	\end{equation*}
	\end{theorem}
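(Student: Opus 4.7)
The plan is to use the colimit presentation $\widehat{Z}(K;(D^2,S^1)) = \colim_{\sigma\in K}\widehat{D}(\sigma)$ from $(\ref{psp})$: produce a natural system of homeomorphisms $\widehat{D}(\sigma) \cong \Sigma^{m+1}|\sigma|$, compatible with face inclusions, and then pass to the colimit.

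First, I would compute $\widehat{D}(\sigma) = (D^2)^{\wedge|\sigma|}\wedge(S^1)^{\wedge(m-|\sigma|)}$ piece by piece. Writing $D^2 = CS^1$ and using the smash--cone identity $CA \wedge B \cong C(A\wedge B)$, which is clean as a homeomorphism for reduced cones on based CW complexes and transfers to the unreduced cone $D^2$ since the two differ by collapse of a contractible subcomplex, one inductively obtains $(D^2)^{\wedge|\sigma|} \cong C^{|\sigma|}S^{|\sigma|}$ and then $\widehat{D}(\sigma) \cong C^{|\sigma|}(S^{|\sigma|})\wedge S^{m-|\sigma|} \cong C^{|\sigma|}(S^m) \cong D^{m+|\sigma|}$, the last step using the special case $D^k\wedge S^\ell \cong D^{k+\ell}$ (via $D^k=CS^{k-1}$ and the same identity). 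On the other hand, $|\sigma| \cong \Delta^{|\sigma|-1}$ is a disk and iterated unreduced suspension of a disk is again a disk, so $\Sigma^{m+1}|\sigma| \cong D^{m+|\sigma|}$. The empty simplex is an exception to this inductive computation but works out directly: $\widehat{D}(\emptyset) = (S^1)^{\wedge m} \cong S^m$ and $\Sigma^{m+1}\emptyset \cong \Sigma^m S^0 \cong S^m$.

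The core of the proof is then to verify that these identifications are natural in $\sigma$: for $\tau\subseteq\sigma$, the inclusion $\widehat{D}(\tau)\hookrightarrow\widehat{D}(\sigma)$ induced by $S^1\hookrightarrow D^2$ in the factors $i\in\sigma\setminus\tau$ should correspond, under our identifications, to the face inclusion $\Sigma^{m+1}|\tau|\hookrightarrow\Sigma^{m+1}|\sigma|$. The key observation is that each instance of $CA \wedge B \cong C(A\wedge B)$ sends the ``base'' inclusion (coming from $A\hookrightarrow CA$) to the corresponding base inclusion of the target cone, and iterating across all factors yields the desired compatibility with face inclusions.

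Granted this naturality, the theorem follows formally, since $\Sigma^{m+1}$ commutes with colimits of topological spaces:
\[
\widehat{Z}(K;(D^2,S^1)) = \colim_{\sigma\in K}\widehat{D}(\sigma) \cong \colim_{\sigma\in K}\Sigma^{m+1}|\sigma| \cong \Sigma^{m+1}\!\left(\colim_{\sigma\in K}|\sigma|\right) = \Sigma^{m+1}|K|.
\]
The main obstacle I anticipate is making the naturality step rigorous across multiple smash factors and uniformly handling the edge case $\sigma = \emptyset$. The cone--smash identity is cleanest for reduced cones on well-based CW complexes, so extra care is needed to ensure that using it with the unreduced $D^2 = CS^1$ produces genuine homeomorphisms, rather than merely homotopy equivalences, that are simultaneously compatible with every face inclusion $\tau\subseteq\sigma$.
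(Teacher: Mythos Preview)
Your outline is plausible and, if the naturality is pinned down, yields a valid proof; but it is a genuinely different route from the paper's. The paper treats this statement as the case $k=1$ of Theorem~\ref{main} and proves it in Section~\ref{four} by a concrete geometric construction: it embeds everything in $\R^{2m}$, defines for each $\sigma\in K$ the subspace $W_\sigma=\Delta_\sigma\overline{\ast}S_\sigma^*=a_\sigma\overline{\ast}S_{[m]}$, observes that $\bigcup_\sigma W_\sigma=|A(K)|\overline{\ast}S_{[m]}\cong\Sigma^m|K|$, and then applies the explicit homeomorphisms $\Psi$ and $\overline{\Psi}$ of Lemma~\ref{psi} together with Remark~\ref{om} to identify $\Sigma\bigl(\bigcup_\sigma W_\sigma\bigr)$ with $\widehat{Z}(K;(D^2,S^1))$. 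No abstract cone--smash identity is invoked; the compatibility with face inclusions is built into the linear-algebraic model, so naturality never has to be argued separately. Your approach trades that explicit geometry for a categorical argument and is in fact much closer in spirit to the paper's Section~\ref{five} proof of the $k=0$ case, which also exhibits a natural isomorphism $\overline{\Psi}\colon\Sigma\Delta\Rightarrow\widehat{D}$ and passes to the colimit---but even there the paper writes down $\overline{\Psi}$ by an explicit formula and checks the commuting square coordinatewise, rather than relying on an abstract $CA\wedge B\cong C(A\wedge B)$.

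Two technical points you should tighten. First, the assertion that $\Sigma^{m+1}$ commutes with colimits of topological spaces is false as stated: unreduced suspension involves a product with $I$, which does not preserve arbitrary colimits. What is true, and sufficient here, is that it preserves this particular colimit because the diagram consists of closed inclusions of compact Hausdorff spaces, so the colimit is a union and $\Sigma$ of a union is the union of suspensions. Second, your treatment of $\sigma=\emptyset$ asserts $\Sigma^{m+1}\emptyset\cong S^m$; under the paper's definition of the join one has $X\ast\emptyset=\emptyset$, so $\Sigma\emptyset=\emptyset$, not $S^0$. You are implicitly using the alternative convention $X\ast\emptyset=X$, which is the right one for this formula to hold but should be stated. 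More substantively, the naturality step you flag as the main obstacle really is the heart of the matter: the iterated cone--smash identifications depend on an ordering of the factors, and you must check that for \emph{every} pair $\tau\subseteq\sigma$ the induced map matches the simplicial face inclusion, including when $\tau$ and $\sigma$ differ in non-adjacent positions. The paper sidesteps this entirely by working with a single global homeomorphism $\overline{\Psi}$ on the ambient space.
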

	
	In this paper we apply the same technique to a more general case. For $k\in \N$, we consider $F=S^{k-1}$, which is compact (as a closed and bounded subspace of $\R^k$), and we have $\Sigma F\cong S^k$, $c\ast\Sigma F\cong D^{k+1}$ and $\bast^m F\cong S^{km-1}$ (since $S^i\ast S^j\cong S^{i+j+1}$). Hence
	\begin{align*}
		\left( \bast^m F\right) \ast|K| & \cong  S^{km-1}\ast|K|\\
		& \cong \left(\bast^{km}S^0\right) \ast |K|\\
		& \cong \Sigma^{km} |K|.
	\end{align*}
	
	So we can state a generalization of Stone's result. 
	
	\begin{theorem}\label{main} For any $k\in \N\cup\{0\}$, there is a homeomorphism
		\begin{equation*}
			\widehat{Z}(K; (D^{k+1},S^k))\cong \Sigma^{km+1} |K|.
		\end{equation*} 
	\end{theorem}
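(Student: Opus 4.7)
The plan is to adapt Stone's geometric technique: model each disk factor $D^{k+1}$ as the reduced cone $S^k \wedge I$, thereby splitting $\widehat{Z}(K)$ into a ``spherical'' part $S^{km}$ and a ``cubical'' part that I will identify with a suspension of $|K|$.

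I will first use $D^{k+1} \cong S^k \wedge I$ (with $I = [0,1]$ based at $0$, and the inclusion $S^k \hookrightarrow D^{k+1}$ corresponding to the slice $t = 1$) to obtain $\bigwedge_{i=1}^m D^{k+1} \cong S^{km} \wedge I^{\wedge m}$, where $S^{km} = (S^k)^{\wedge m}$. Under this identification, a direct check from the definition of $\widehat{Z}$ as a union over $\sigma \in K$ shows
\[
\widehat{Z}(K; (D^{k+1}, S^k)) \cong \{[u, t] \in S^{km} \wedge I^{\wedge m} : \{i : t_i < 1\} \in K\},
\]
and since the defining condition involves only the $t$-coordinates, this subspace factors as $S^{km} \wedge T_K$, where $T_K := \{[t] \in I^{\wedge m} : \{i : t_i < 1\} \in K\}$.

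The key geometric step is then to establish $T_K \cong \Sigma |K|$ explicitly. I will parametrize $|K| \subseteq \Delta^{m-1}$ using barycentric coordinates $\kappa = (\kappa_1, \ldots, \kappa_m)$ (so $\sum_i \kappa_i = 1$ and $\{i : \kappa_i > 0\} \in K$), and define
\[
\Psi: \Sigma |K| \to T_K, \qquad \Psi([\kappa, s]) = \left(1 - s \cdot \frac{\kappa_i}{\max_j \kappa_j}\right)_{i=1}^m.
\]
At the two cone points $s = 0$ and $s = 1$, the map $\Psi$ collapses to the constants $(1, \ldots, 1) \in T_K$ and to the basepoint of $T_K$ respectively, matching the identifications defining $\Sigma |K|$; in the interior the condition $\{i : \Psi(\kappa,s)_i < 1\} = \{i : \kappa_i > 0\} \in K$ holds automatically. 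Bijectivity follows from the explicit inverse $t \mapsto [\kappa, s]$ with $\kappa_i = (1 - t_i) / \sum_j (1 - t_j)$ and $s = \max_i (1 - t_i)$, and compactness of both spaces promotes the continuous bijection $\Psi$ to a homeomorphism.

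Combining the steps yields $\widehat{Z}(K; (D^{k+1}, S^k)) \cong S^{km} \wedge \Sigma |K|$, and the final identification $S^{km} \wedge \Sigma |K| \cong \Sigma^{km+1} |K|$ follows from the standard sphere smash/join relation $S^n \wedge \Sigma Y \cong S^n \ast Y = \Sigma^{n+1} Y$ for compact CW complexes. The principal obstacles I foresee are, first, verifying the factorization $\widehat{Z}(K) \cong S^{km} \wedge T_K$ rigorously at the level of a homeomorphism (the identification $D^{k+1} \cong S^k \wedge I$ is not realized by the naive polar-coordinate map, since the latter collapses an arc from the origin through the basepoint, so the various pointed-space identifications must be tracked carefully); and second, ensuring that the final passage from $S^{km} \wedge \Sigma|K|$ to $\Sigma^{km+1}|K|$ is a genuine homeomorphism rather than merely a homotopy equivalence.
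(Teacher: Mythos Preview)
Your approach is correct and takes a genuinely different route from the paper's. For $k\geq 1$ the paper works entirely inside $\R^{(k+1)m}$: it realizes each factor $(D^{k+1},S^k)$ as a $k$-simplex $(\Delta_i,\partial\Delta_i)$, forms the geometric joins $W_\sigma=\Delta_\sigma\,\overline{\ast}\,S_\sigma^*$, and shows that $\bigcup_\sigma W_\sigma = |A(K)|\,\overline{\ast}\,S_{[m]}\cong \Sigma^{km}|K|$ as a subspace of $\Delta^{(k+1)m-1}$; Stone's explicit homeomorphism $\overline{\Psi}:\Sigma\Delta^{(k+1)m-1}\to\widetilde{D}^{(k+1)m}$ then carries $\Sigma(\bigcup_\sigma W_\sigma)$ simultaneously to $\widehat{Z}(K)$ and to $\Sigma^{km+1}|K|$. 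You instead strip off the spherical content algebraically via $D^{k+1}\cong S^k\wedge I$, reducing everything to the $m$-dimensional object $T_K$, which is exactly $\widehat{Z}(K;(D^1,S^0))$; your map $\Psi:\Sigma|K|\to T_K$ is essentially the paper's $\overline{\Psi}$ in dimension $m$ (compare your formula with the paper's definition of $\Psi$, which also pivots on $\max_j t_j$). Thus your argument is closer in spirit to the paper's Section~6, which reduces the general case to $k=0$ by doubling one vertex at a time, except that you perform the reduction in a single smash-factorization step; the paper's Section~4 argument, by contrast, stays piecewise-linear throughout and never invokes smash identities. On your two flagged concerns: the pair homeomorphism $(D^{k+1},S^k)\cong(S^k\wedge I,\,S^k\wedge\{1\})$ does hold and is all you need for the factorization; and the final step $S^{km}\wedge\Sigma|K|\cong\Sigma^{km+1}|K|$ is a genuine homeomorphism, obtainable from $C(X\ast Y)\cong CX\times CY$ (the paper's \textbf{Lemma~\ref{bbcg}}) with $X=S^{km-1}$, $Y=|K|$, which yields
\[
\Sigma^{km+1}|K|\;\cong\;\frac{D^{km}\times C|K|}{S^{km-1}\times C|K|\,\cup\,D^{km}\times |K|}\;=\;(D^{km}/S^{km-1})\wedge(C|K|/|K|)\;\cong\;S^{km}\wedge\Sigma|K|.
\]
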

	
%We can observe that \textbf{Theorem \ref{stone}} is the particular case of \textbf{Theorem \ref{main}} where $k=1$. Here we generalize the argument of David Stone's proof for an arbitrary $k\in \N$.

The goal of this paper is first to generalize David Stone's technique for the proof of \textbf{Theorem \ref{main}} and secondly to provide a further generalization (see \textbf{Theorem \ref{gen}}) of the latter result for a set of disks and spheres of different dimensions. 

\begin{theorem}\label{maingen} For any $m$-tuple $J=(j_1,\cdots,j_m)$ in $(\N\cup\{0\})^m$, there is a homeomorphism
	\begin{equation*}
		\widehat{Z}(K;(\underline{D^{J+1}},\underline{S^J}))\cong \Sigma^{j_1+\cdots+j_m+1} |K|,
	\end{equation*}
where $\left( \underline{D^{J+1}},\underline{S^J}\right) =\left\lbrace \left( D^{j_1+1}, S^{j_1}\right) ,\cdots,\left( D^{j_m+1},S^{j_m}\right) \right\rbrace$.
\end{theorem}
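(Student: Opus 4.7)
The plan is to follow the geometric technique used to establish Theorem \ref{main}, which itself generalizes Stone's argument for Theorem \ref{stone}. A key observation is that that argument never actually exploits the pairs $(D^{j_i+1},S^{j_i})$ having a common dimension; extending it to variable dimensions amounts largely to changing bookkeeping, replacing the exponent $km$ by $j_1+\cdots+j_m$.

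First I would parameterize each disk in polar coordinates, writing $D^{j_i+1}\cong(I\times S^{j_i})/(\{0\}\times S^{j_i})$ so that the boundary $S^{j_i}$ appears as $\{1\}\times S^{j_i}$. A point of the ambient smash $\bigwedge_{i=1}^m D^{j_i+1}$ is then represented by a tuple $((t_1,u_1),\dots,(t_m,u_m))$ with $t_i\in I$ and $u_i\in S^{j_i}$, modulo cone collapse on each factor and the usual smash-basepoint identifications. Under this parameterization, the subspace $\widehat{D}(\sigma)$ corresponds to tuples with $t_i=1$ for all $i\notin\sigma$, so a point of $\widehat{Z}(K;(\underline{D^{J+1}},\underline{S^J}))$ is exactly such a tuple satisfying $\{i:t_i<1\}\in K$.

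Next I would separate the angular coordinates $(u_i)_i$ from the radial coordinates $(t_i)_i$. After modding out by the $m$ independent cone collapses, the angular coordinates collectively live in $\bigwedge_{i=1}^m S^{j_i}\cong S^{j_1+\cdots+j_m}$. The radial coordinates, together with the constraint $\{i:t_i<1\}\in K$ and the remaining basepoint identifications, package into the unreduced suspension $\Sigma|K|$ via precisely the construction appearing in the proof of Theorem \ref{main}: rescaling $s_i=1-t_i$ presents a point of $|K|$ (namely the normalization of $(s_i)_{i\in[m]}$) together with an overall ``height'' $\max_i s_i$. Combining the two factors via the appropriate smash gives
\[
\widehat{Z}(K;(\underline{D^{J+1}},\underline{S^J}))\;\cong\;S^{j_1+\cdots+j_m}\wedge\Sigma|K|\;\cong\;\Sigma^{j_1+\cdots+j_m+1}|K|.
\]

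The main obstacle, as in the uniform-dimension case, is upgrading this set-theoretic bijection to a genuine homeomorphism by producing continuous maps in both directions. The delicate points are the simultaneous handling of the $m$ cone collapses at $t_i=0$, the smash-wedge collapse at the basepoints $(1,u_i^*)$ on each disk, and the colimit gluings along the face inclusions $\rho\subseteq\sigma$ in $K$. Once these have been verified in the course of establishing Theorem \ref{main}, the variable-dimension case follows essentially for free, because every step of the construction treats the $i$-th factor entirely independently of the others; no ingredient requires the $j_i$ to be equal.
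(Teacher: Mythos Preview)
Your approach is genuinely different from the paper's. The paper does \emph{not} adapt the geometric argument of Theorem~\ref{main} to variable dimensions. Instead, it introduces a combinatorial ``vertex-doubling'' operation $K\mapsto K(J_i)$ and proves two lemmas: (i) $\widehat{Z}(K;(\underline{D^{J+1}},\underline{S^J}))\cong\widehat{Z}(K(J_i);(\underline{D^{J'+1}},\underline{S^{J'}}))$, where $J'$ is the $(m{+}1)$-tuple with $j_i$ replaced by $(j_i{-}1,0)$; and (ii) $|K(J_i)|\cong\Sigma|K|$. Iterating these $j_1+\cdots+j_m$ times reduces everything to the case $J=(0,\dots,0)$, which is exactly Theorem~\ref{zero}. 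This route is modular: each step is a small, directly checkable homeomorphism, and no continuity analysis beyond the $k=0$ base case is needed.

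Your direct approach is plausible in outline, but the claim that it ``follows essentially for free'' from the proof of Theorem~\ref{main} overstates matters. The argument in Section~\ref{four} is not carried out in polar coordinates; it uses a specific embedding in $\R^{(k+1)m}$ via geometric joins, with $S_i=\partial\Delta_i\cong S^{k-1}$, and this is precisely why the paper must treat $k=0$ separately in Section~\ref{five}: when $k=0$ the boundary $S_i$ is empty and the decomposition $\Delta_i=a_i\,\overline{\ast}\,S_i$ of Lemma~\ref{gjs} degenerates. In your variable-dimension setting, any factor with $j_i=0$ would hit the same obstacle. Your polar-coordinate parametrization does sidestep this (since $S^{j_i}$ rather than $S^{j_i-1}$ is what appears), but that means you are really proposing a \emph{new} argument rather than a bookkeeping variant of Section~\ref{four}; the continuity verifications at the cone collapses, the smash basepoints, and across the face inclusions---which you yourself flag as ``the main obstacle''---genuinely need to be carried out and are not inherited from the existing proof.
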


In order to prove \textbf{Theorem \ref{main}} we need to put together some topological and combinatorial tools, hence the rest of the paper is organized as follows. In \textsc{Sections} \ref{two} and \ref{three}, we describe respectively the necessary topological and combinatorial tools. \textsc{Section} \ref{four} is devoted to the proof of \textbf{Theorem \ref{main}}, for $k\geq 1$. The case $k=0$, namely \textbf{Theorem \ref{zero}}, is treated in \textsc{Section} \ref{five} using a more categorical argument. Finally,  in \textsc{Section \ref{six}} we prove the main result, \textbf{Theorem \ref{gen}}, using an inductive argument based on the case $k=0$.
	\vspace{.5cm}
	
		\paragraph*{\textbf{Acknowledgments}} 
	I would like to thank the Fields Institute at the University of Toronto where I started this work during the Thematic Program on Toric Topology and Polyhedral Products, for the inspiring work environment and the valuable financial support. I am thankful to Anthony Bahri for sharing the private correspondence from David Stone with me. Also I would like to express my deep gratitude to both Donald Stanley and Martin Frankland for their scientific and financial contribution to the realization of this work. Finally, I am deeply grateful to the referee for the concrete idea and guideline regarding the methodology that helped me to build the entire \textsc{Section} \ref{six}.
	
	\section{Topological tools}\label{two}
	Among the tools we use in the proof of \textbf{Theorem \ref{main}}, the homeomorphisms $\Psi: C\Delta^{n-1}  \to C^n$ and $\overline{\Psi}:\Sigma\Delta^{n-1}\to \widetilde{D}^n$, described below, are both playing an important role. They were defined by David Stone in \cite{david} and we recycle them here to prove this more general case. Before we introduce them, let us first recall the usual homeomorphism $\widetilde{\Theta}: CX/X\to \Sigma X$.
	
	Given a space $X$, we identify $X$ with the base $c\times X\times \{1\}$ of $CX=c\ast X$. Set $S^0=\{s_1,s_2\}$ to be the $0$-sphere and consider the map \begin{align*}
		\Theta: CX & \to \Sigma X \\
		[c,x,\lambda] & \mapsto \Theta[c,x,\lambda]=\begin{cases}
			(s_1, x, 2\lambda),\ \text{if}\ 0\leq \lambda\leq \dfrac{1}{2}\\
			(s_2,x,2-2\lambda),\ \text{if}\ \dfrac{1}{2}\leq \lambda\leq 1.
		\end{cases}
	\end{align*}
	Then $\Theta$ factors through a map $\widetilde{\Theta}: CX/X\to \Sigma X$; see \textsc{Figure} \ref{theta}.
	
	\begin{figure}[h]
		\begin{center}
			\includegraphics[scale=.8]{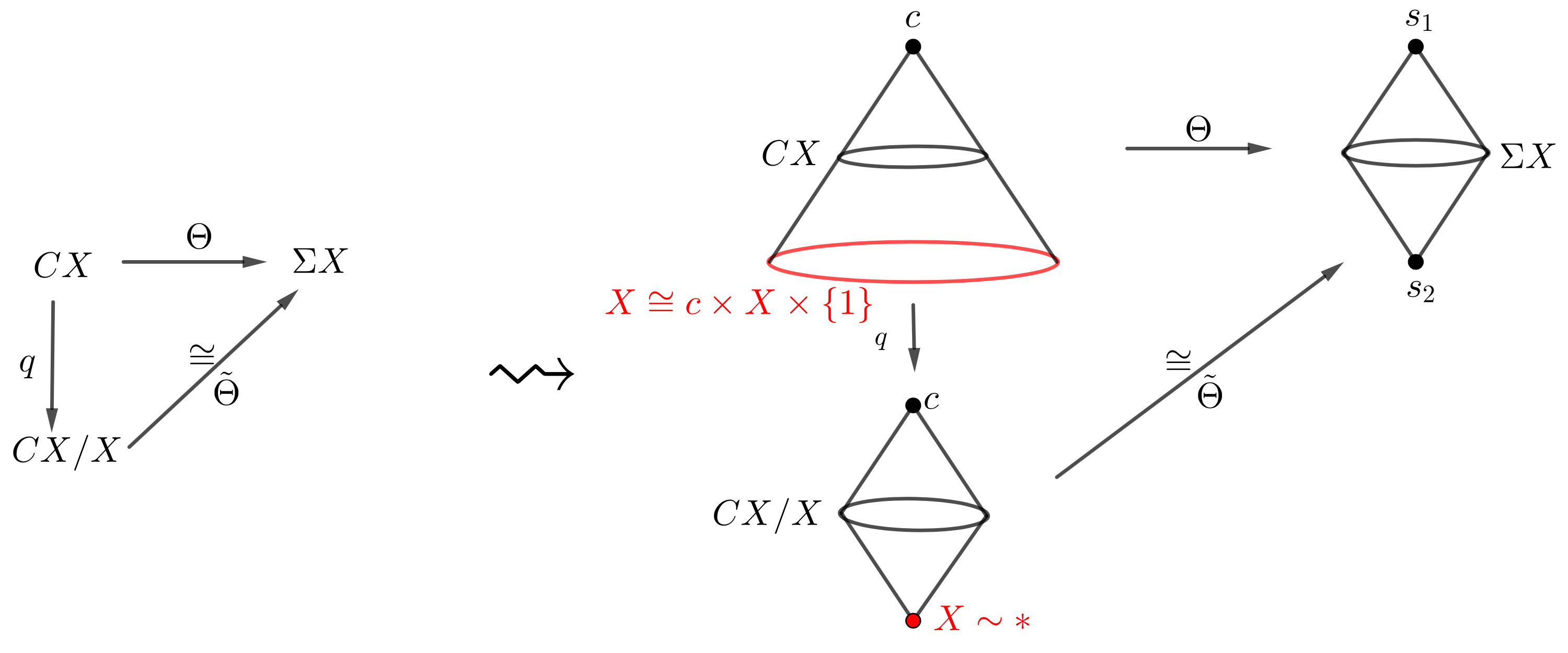}
		\end{center}
		\caption{Factorization of $\Theta$ through $\widetilde{\Theta}$.}
		\label{theta}
	\end{figure}
	
	\begin{lemma}\label{the}
		The map $\widetilde{\Theta}$ is a homeomorphism.
	\end{lemma}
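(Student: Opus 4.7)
The plan is to exhibit an explicit continuous two-sided inverse $\widetilde{\Phi}:\Sigma X\to CX/X$; once we have maps in both directions whose composites are the identity, both are homeomorphisms, and no compactness/Hausdorff hypothesis on $X$ is required.

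First I would clean up the well-definedness and continuity of $\widetilde{\Theta}$ itself. At $\lambda=1/2$ the two case formulas give $(s_1,x,1)$ and $(s_2,x,1)$, which coincide in $\Sigma X=S^0\ast X$ by the $t=1$ identification of the join (which collapses the $S^0$-direction). At $\lambda=0$ the formula yields $(s_1,x,0)$, which by the $t=0$ join identification is the single apex point $s_1$ independent of $x$; so $\Theta$ respects the apex identification of $CX=c\ast X$. At $\lambda=1$ the formula yields $(s_2,x,0)=s_2$, independent of $x$, so $\Theta$ is constant on the base $X\subseteq CX$ and therefore descends through $CX/X$. Continuity of $\Theta$ on each of the two half-cones is obvious, and the pasting lemma at $\lambda=1/2$ gives continuity on all of $CX$; passage to the quotient gives continuity of $\widetilde{\Theta}$.

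Next I would define the candidate inverse by
\[
\widetilde{\Phi}(s_1,x,t)=[c,x,t/2],\qquad \widetilde{\Phi}(s_2,x,t)=[c,x,1-t/2],
\]
which is the obvious formula obtained by solving $2\lambda=t$ and $2-2\lambda=t$ respectively. The well-definedness check on $\Sigma X$ is symmetric to the previous one: at $t=0$ both formulas yield points independent of $x$ (the apex class $[c,x,0]$ or the collapsed base class $[c,x,1]$ in $CX/X$), matching the $t=0$ identification in the join; at $t=1$ both formulas give $[c,x,1/2]$, so they agree on $S^0\times X\times\{1\}$, matching the $t=1$ identification. Continuity again follows by the pasting lemma.

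Finally I would verify $\widetilde{\Theta}\circ\widetilde{\Phi}=\mathrm{id}_{\Sigma X}$ and $\widetilde{\Phi}\circ\widetilde{\Theta}=\mathrm{id}_{CX/X}$ by direct substitution, splitting on $s=s_1$ versus $s=s_2$ and on $\lambda\le 1/2$ versus $\lambda\ge 1/2$; each case reduces to a one-line arithmetic identity. The only genuine obstacle is keeping the various endpoint identifications in the two joins straight (which coordinate is collapsed at $t=0$ versus $t=1$), since the formula for $\Theta$ swaps the roles: the apex of $CX$ at $\lambda=0$ goes to a non-collapsed slice of $\Sigma X$, while the non-collapsed base of $CX$ at $\lambda=1$ goes to the apex $s_2$ of $\Sigma X$. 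Once this bookkeeping is made explicit, the verification is a short mechanical calculation.
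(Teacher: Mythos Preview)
Your proof is correct. The paper actually states this lemma without proof, treating it as a standard fact, so there is no argument in the paper to compare against directly. Your explicit two-sided inverse works and has the virtue of avoiding any compactness or Hausdorff hypotheses on $X$; by contrast, the paper's default technique elsewhere (e.g.\ in the proofs of \textbf{Lemma~\ref{psi}} and \textbf{Lemma~\ref{bbcg}}) is ``continuous bijection from compact to Hausdorff,'' which would need such hypotheses here.

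One small quibble: the closing remark that ``the apex of $CX$ at $\lambda=0$ goes to a non-collapsed slice of $\Sigma X$'' is not quite right. The apex $[c,x,0]$ is sent to $(s_1,x,0)$, which under the $t=0$ identification in $S^0\ast X$ is the single point $s_1$ --- an apex of $\Sigma X$, not a non-collapsed slice. Likewise the base at $\lambda=1$ goes to the other apex $s_2$. There is no ``swap'' of collapsed and non-collapsed ends; both endpoints of $[0,1]$ in $CX$ map to apices of $\Sigma X$, and the equatorial copy of $X$ in $\Sigma X$ is the image of $\lambda=1/2$. This does not affect the argument, since your actual well-definedness checks at $\lambda=0,\tfrac12,1$ and $t=0,1$ are all correct; just drop or rephrase that sentence.
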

	
	\begin{notation}\label{CD}
		In $\R^n$, 
		\begin{itemize}
			\item let $e_i$ be the $i^\text{th}$ standard basis vector. Let $c$ denote the origin and let $t_1,\cdots,t_n$ denote the coordinates of a point $x\in \R^n$. We identify $x\sim \overrightarrow{cx}$ and so $x=\sum_{i=1}^n t_i e_i$.
			\item Set $C=[0,2]$, with based point $2$ and consider
			
			$C^n=[0,2]^n=\left\lbrace \sum_{i=1}^n t_i e_i\in \R^n: 0\leq t_i\leq 2\right\rbrace $, the $n$-cube of side $2$.
			
			$\partial_+ C^n=\left\lbrace \sum_{i=1}^n t_i e_i\in C^n: \max t_i=2\right\rbrace $, the outer boundary of $C^n$.
			
			$\partial_- C^n=\left\lbrace \sum_{i=1}^n t_i e_i\in C^n: \min t_i=0\right\rbrace $, the inner boundary of $C^n$.
			
			$\partial C^n=\partial_+ C^n \cup \partial_- C^n$, the boundary of $C^n$.
			
			$\widetilde{D}^n=C^n/\partial_+ C^n$, with the quotient map $\omega: C^n\to \widetilde{D}^n$.
		\end{itemize}
	\end{notation}
	\begin{lemma}\label{nsdisk}
		The quotient space $\widetilde{D}^n$ is a topological disk.
		%, that is $\widetilde{D}^n$ is homeomorphic to the $n$-disk $D^n$.
	\end{lemma}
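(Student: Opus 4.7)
The plan is to identify $\widetilde{D}^n$ with the suspension of a topological $(n-1)$-disk, and then observe that such a suspension is itself an $n$-disk. First I would exhibit a cone structure on $C^n$ with apex at the origin $c$ and base $\partial_+ C^n$. For any nonzero $t = \sum_{i=1}^n t_i e_i \in C^n$, setting $M(t) = \max_i t_i > 0$, one has $t = (M(t)/2)\cdot (2t/M(t))$ with $2t/M(t) \in \partial_+ C^n$ and $M(t)/2 \in (0,1]$. This yields a continuous bijection
\[ \phi: c\ast \partial_+ C^n \longrightarrow C^n, \qquad [c, v, \lambda] \mapsto \lambda v, \]
which is a homeomorphism since the source is compact and the target Hausdorff. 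Applying \textbf{Lemma \ref{the}} with $X = \partial_+ C^n$ then gives $\widetilde{D}^n = C^n/\partial_+ C^n \cong C(\partial_+ C^n)/\partial_+ C^n \cong \Sigma(\partial_+ C^n)$.

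Next I would identify $\partial_+ C^n$ as a topological $(n-1)$-disk. Combinatorially it is the closed star of the vertex $v = (2,\ldots,2)$ in the PL $(n-1)$-sphere $\partial C^n$, namely the union of the $n$ codimension-one faces $F_i = \{t_i = 2\}$ meeting at $v$. Repeating the radial trick inside $\partial_+ C^n$, this time with apex $v$, gives a homeomorphism $\partial_+ C^n \cong C(\partial_+ C^n \cap \partial_- C^n)$. The intersection $\partial_+ C^n \cap \partial_- C^n = \{t \in C^n : \max_i t_i = 2,\ \min_i t_i = 0\}$ is homeomorphic to $S^{n-2}$, so $\partial_+ C^n \cong CS^{n-2} \cong D^{n-1}$.

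Combining the two steps, $\widetilde{D}^n \cong \Sigma D^{n-1}$. To conclude, I would write $\Sigma D^{n-1}$ as the union of two cones $CD^{n-1}$ glued along their common base $D^{n-1}$; each cone is an $n$-dimensional closed half-ball whose flat equatorial face corresponds to the base, and two such half-balls glued along their flat equator reassemble to $D^n$. The main technical hurdle is the identification $\partial_+ C^n \cap \partial_- C^n \cong S^{n-2}$: in low dimensions this is visually evident, but in general the cleanest route is an induction on $n$ using the fact that the link of a vertex in a PL sphere is a PL sphere of one lower dimension, or an explicit parametrization via the normalization $t \mapsto t/\max_i t_i$ together with the coordinate symmetry swapping the roles of $0$ and $2$. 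A secondary point requiring care is continuity of $\phi^{-1}$ at the apex, which follows from the bound $\|t\| \leq \sqrt{n}\, M(t)$.
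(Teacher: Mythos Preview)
Your argument is correct and follows essentially the same route as the paper: identify $C^n$ as a cone on $\partial_+C^n$, recognize $\partial_+C^n$ as a copy of $D^{n-1}$, and conclude $\widetilde{D}^n \cong \Sigma D^{n-1} \cong D^n$. The paper simply asserts the pair identification $(C^n,\partial_+C^n)\cong (c\ast D^{n-1},D^{n-1})$ without further comment, whereas you supply the explicit radial homeomorphism and a second cone argument to justify $\partial_+C^n\cong D^{n-1}$; so your write-up is more detailed but not genuinely different in strategy.
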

	
	\begin{proof}
		%	It suffices to observe that $\widetilde{D}^n\cong D \subseteq \R^n$ (See Figure \ref{Dn}), where $D$ is a closed $n$-cell  and use the fact that $D\cong D^n$ by [\textbf{Proposition 5.1}, Lee].
		
		Considering the CW-pair $(C^n, \partial_+C^n)$, we have
		\begin{align*}
			(C^n, \partial_+C^n) & \cong (c \ast \partial_+C^n, \partial_+C^n) \\
			& \cong (c \ast D^{n-1}, D^{n-1}) ,
		\end{align*} 
		where the latter pair is the inclusion of the base of the cone $c\times D^{n-1} \times \{1\}\cong D^{n-1}\subseteq (c\ast D^{n-1})$. Hence collapsing the respective subspaces yields a homeomorphism
		\begin{align*}
			\widetilde{D}^n= C^n / \partial_+ C^n & \cong (c\ast D^{n-1}) / D^{n-1}\\
			& \cong \Sigma D^{n-1} \\
			&  \cong D^n.
		\end{align*}
Therefore $\widetilde{D}^n$ is a topological disk.
\end{proof}

	\begin{notation}\label{ch} Let us consider the following setup.
		
		\begin{itemize}
			\item For any set $X=\{x_1,\cdots, x_p\}\subseteq \R^n$, let $\text{cx}(X)$ denote the convex hull of $X$, that is
			\[\text{cx}(X)=\left\lbrace \sum_{i=1}^p t_i x_i\in \R^n: t_i\geq 0, \sum_{i=1}^p t_i =1\right\rbrace .\]
			
			\item Set $\Delta^{n-1}=\text{cx}\{e_1,\cdots,e_n\}$ to be the standard $(n-1)$-simplex.
			
			\item For any $J\subseteq [n]$, set $\Delta(J)=\text{cx}\left( \{e_i: i\in J\}\right)\cong \Delta^{|J|-1}$, where $|J|$ denotes the cardinality of $J$.
		\end{itemize}
	\end{notation}

	\begin{remark}
	The abstract cone $C\Delta^{n-1}$ can be realized as a subspace of $\R^n$, a subspace which is homeomorphic to the $n$-cube $C^n$ by reparametrization as we can observe in \textsc{Figure} \ref{Psi}. This motivates the existence of a bijection $\Psi:C\Delta^{n-1} \to C^n$, defined by equation $(\ref{Psidef})$.
	\begin{figure}[h]
	\begin{center}
		\includegraphics[scale=.8]{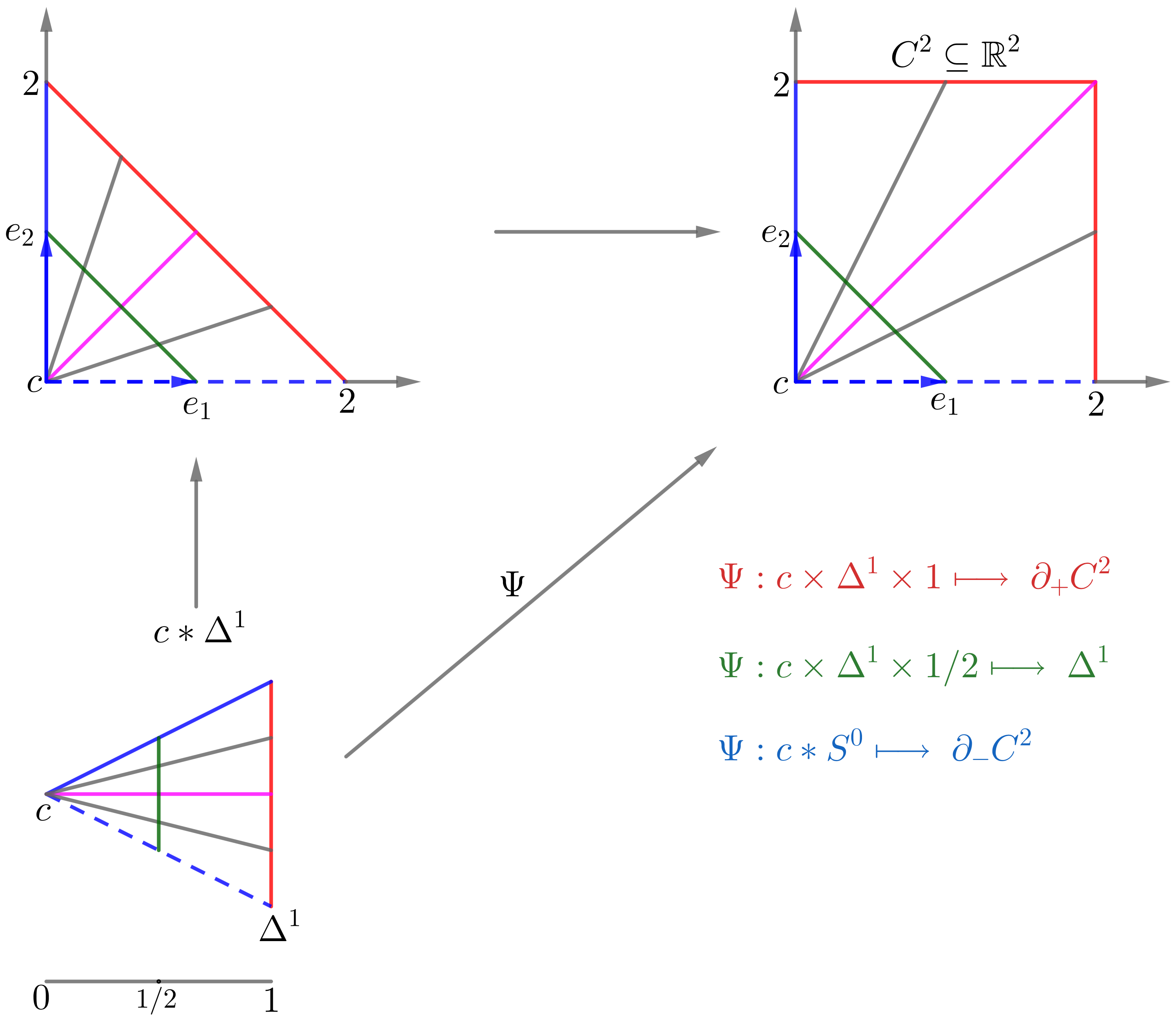}
	\end{center}
	\caption{Illustration of the map $\Psi: C\Delta^{1} \to C^2$, that is for $n=2$.}
	\label{Psi}
	\end{figure}
	
	\end{remark}
	For $x=\sum_{i=1}^n t_i e_i\in \Delta^{n-1}$, set $\overline{t}=\max \{t_i\}$, so $\overline{t}>0$. Define the map
	\begin{align}\label{Psidef}
		\Psi: C\Delta^{n-1} & \to C^n \\
		[c,x,\lambda] & \mapsto \Psi[c,x,\lambda]=\begin{cases}
			2\lambda x,\ \text{if}\ 0\leq \lambda\leq \dfrac{1}{2} \nonumber\\
			\left( (2-2\lambda)+(2\lambda-1)\dfrac{2}{\overline{t}}\right) x,\ \text{if}\ \dfrac{1}{2}\leq \lambda\leq 1,
		\end{cases}
	\end{align}
where $C^n=[0,2]^n$ is the $n$-cube of side $2$ set in \textbf{Notation \ref{CD}}.

\begin{remark}
As mentioned in \textbf{Notation \ref{CD}}, the basepoint of $C=[0,2]$ is $2$. The above defined map $\Psi$ does not send the cone point to the basepoint $(2,\cdots,2)$ of the $n$-cube $C^n$, as one might expect, but to the origin $c$ of $\R^n$ for convenience.	
\end{remark}

	By \textbf{Lemma \ref{the}}, we have $C\Delta^{n-1}/\Delta^{n-1}\cong \Sigma \Delta^{n-1}\cong \Delta^n$. Also $\Psi(c\times \Delta^{n-1}\times \{1\})=\partial_+C^n$ and hence, $\Psi$ factors through the map 
	\[\overline{\Psi}:\Sigma\Delta^{n-1}\to \widetilde{D}^n,\]
	where $\widetilde{D}^n=C^n/\partial_+ C^n$ is the topological disk introduced in \textbf{Notation \ref{CD}}.
	\begin{center}
		\includegraphics[scale=.75]{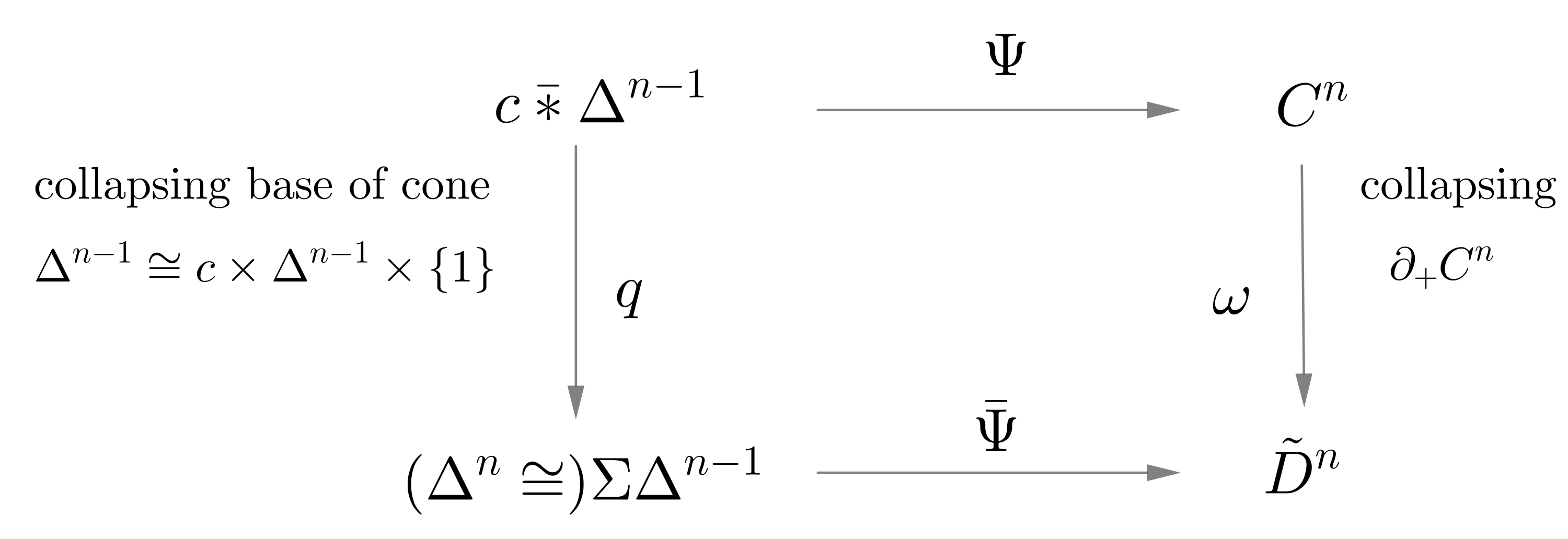}
	\end{center}
	
	\begin{lemma}\label{psi}
		The maps $\Psi$ and $\overline{\Psi}$ are both homeomorphisms.
	\end{lemma}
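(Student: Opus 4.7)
The plan is to show $\Psi$ is a continuous bijection from the compact cone $C\Delta^{n-1}$ onto the Hausdorff cube $C^n$, so that it is automatically a homeomorphism, and then to pass to the quotient to obtain $\overline{\Psi}$.

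For well-definedness and continuity, at $\lambda=0$ the formula gives $\Psi[c,x,0]=0$ for every $x$, which respects the cone identification at the apex. The function $\overline{t}(x)=\max_i t_i$ is continuous and bounded below by $1/n$ on $\Delta^{n-1}$, so each piece of the piecewise definition is continuous, and the two pieces agree at $\lambda=1/2$ (both yielding $x$), so the pasting lemma applies.

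To establish bijectivity I would exhibit an explicit inverse. Geometrically, for each fixed $x\in\Delta^{n-1}$ the curve $\lambda\mapsto\Psi[c,x,\lambda]$ traverses the ray from $0$ through $x$ out to the point $\frac{2}{\overline{t}(x)}x\in\partial_+C^n$, passing through $x$ itself at $\lambda=1/2$. Given $y\in C^n\setminus\{0\}$, set $s=\sum_i y_i$ and $x=y/s\in\Delta^{n-1}$. When $s\leq 1$, take $\lambda=s/2\in[0,1/2]$. When $s\geq 1$, the equation $(2-2\lambda)+(2\lambda-1)\frac{2}{\overline{t}(x)}=s$ has a unique solution $\lambda\in[1/2,1]$, since the left-hand side is strictly increasing in $\lambda$ (its derivative $2(2/\overline{t}(x)-1)$ is positive because $\overline{t}(x)\leq 1$) and runs from $1$ at $\lambda=1/2$ to $2/\overline{t}(x)=2s/\overline{y}\geq s$ at $\lambda=1$. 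These prescriptions are mutually inverse to $\Psi$, and with $\Psi^{-1}(0)$ set to be the cone point we obtain bijectivity. Since $C\Delta^{n-1}$ is compact and $C^n$ is Hausdorff, $\Psi$ is a homeomorphism.

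Finally, for $\overline{\Psi}$ I would check that $\Psi$ carries the base $c\times\Delta^{n-1}\times\{1\}$ homeomorphically onto $\partial_+C^n$: the forward inclusion is immediate from the formula at $\lambda=1$, and the inverse recipe above shows that any $y\in\partial_+C^n$, for which $\overline{y}=2$, satisfies $2s/\overline{y}=s$ and hence forces $\lambda=1$. The induced map on the quotients is therefore a homeomorphism $C\Delta^{n-1}/\Delta^{n-1}\to C^n/\partial_+C^n=\widetilde{D}^n$, and identifying the source with $\Sigma\Delta^{n-1}$ by \textbf{Lemma \ref{the}} produces $\overline{\Psi}$. The main bookkeeping hurdle is ensuring that the two piecewise formulas combine into a well-defined and globally continuous inverse, which reduces to the monotonicity observation used above.
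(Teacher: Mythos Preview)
Your proof is correct and follows the same approach as the paper's: show $\Psi$ is a continuous bijection from the compact space $C\Delta^{n-1}$ to the Hausdorff space $C^n$, conclude it is a homeomorphism, and then pass to the quotient pair $(C\Delta^{n-1},\Delta^{n-1})\cong(C^n,\partial_+C^n)$ to obtain $\overline{\Psi}$. You have simply supplied the details (well-definedness, continuity via the pasting lemma, and the explicit inverse) that the paper leaves implicit.
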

	
	\begin{proof} As a continuous bijection from the compact space $C\Delta^{n-1}$ to the Hausdorff space $C^n$, $\Psi$ is a homeomorphism. Hence, $\Psi$ gives us the homeomorphism of the pairs $(C\Delta^{n-1},\Delta^{n-1})\cong (C^n,\partial_+C^n)$, so that the induced map $\overline{\Psi}:\Sigma\Delta^{n-1}\to \widetilde{D}^n$ is a homeomorphism.
	\end{proof}
	
	\begin{remark}\label{om}
		If we consider $\left( [0,2], 2\right) $ to be a pointed space, then collapsing $\partial_+C^{2(k+1)}$ in $C^{2(k+1)}\cong C^{k+1}\times C^{k+1}$, we get $\widetilde{D}^{k+1}\bigwedge\widetilde{D}^{k+1}\cong D^{k+1}\bigwedge D^{k+1}\cong D^{2(k+1)}$. This can be generalized to the case of $C^{p(k+1)}\cong \underbrace{C^{k+1}\times\cdots\times C^{k+1}}_{p \ \text{times}}$ and so collapsing $\partial_+C^{p(k+1)}$ corresponds to $\bigwedge^p \widetilde{D}^{k+1}\cong \bigwedge^p D^{k+1}\cong D^{p(k+1)}$. Hence 
		
		\begin{align*}
			&\omega\left( \prod^p C^{k+1}\right) \cong \bigwedge^p \widetilde{D}^{k+1}\cong \bigwedge^p D^{k+1}\ \text{and}\\ 
			&\omega\left( \prod^p \partial_- C^{k+1}\right) \cong \bigwedge^p \partial \widetilde{D}^k \cong \bigwedge^p S^k,\ \text{where}\ \partial \widetilde{D}^k\ \text{denotes the boundary of}\ \widetilde{D}^{k+1}.
		\end{align*}
		
		\begin{center}
			\includegraphics[scale=.7]{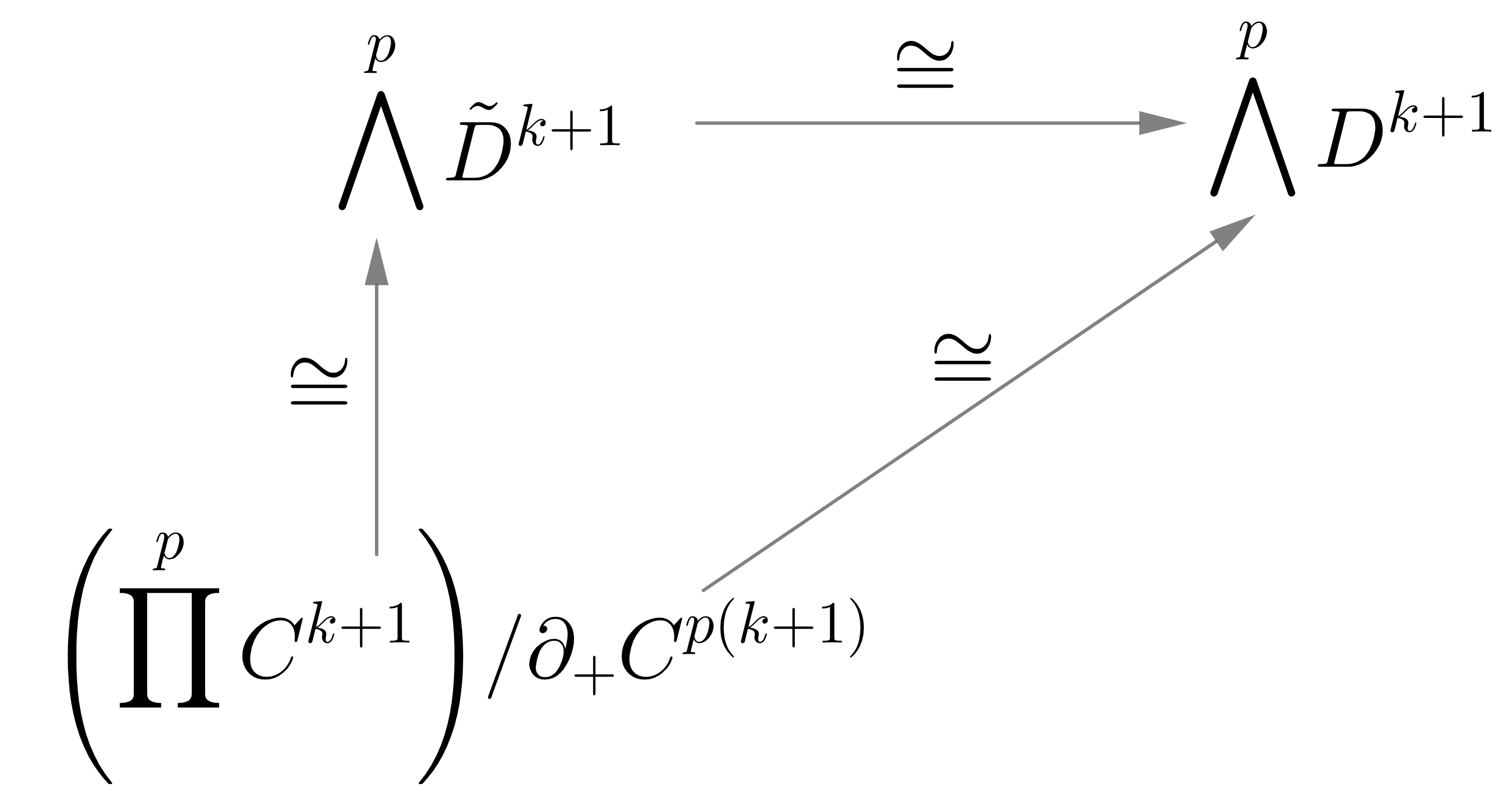}
		\end{center}
	\end{remark}
	
	\begin{lemma}\label{bbcg} For any compact and Hausdorff spaces $X$ and $Y$, there is a homeomorphism 
		\begin{equation*}
			\varphi:C(X\ast Y)\to CX \times CY.
		\end{equation*}
	\end{lemma}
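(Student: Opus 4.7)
My plan is to write $\varphi$ as an explicit piecewise formula in the join and cone parameters, verify it descends to a well-defined continuous bijection, and then conclude by the classical principle that a continuous bijection from a compact space to a Hausdorff space is a homeomorphism. Writing a point of $C(X\ast Y)$ as $[c,[x,y,u],\lambda]$ with $u,\lambda\in I$, I would set
\begin{equation*}
\varphi[c,[x,y,u],\lambda]=
\begin{cases}
\bigl([c_X,x,\lambda],\,[c_Y,y,2u\lambda]\bigr) & \text{if } 0\leq u\leq 1/2,\\
\bigl([c_X,x,(2-2u)\lambda],\,[c_Y,y,\lambda]\bigr) & \text{if } 1/2\leq u\leq 1.
\end{cases}
\end{equation*}
Geometrically, $\lambda$ plays the role of the larger of the two cone coordinates on $CX\times CY$, and as $u$ runs from $0$ to $1$ the image traces the ``top boundary'' of the square $[0,\lambda]\times[0,\lambda]$ from $(\lambda,0)$ through $(\lambda,\lambda)$ to $(0,\lambda)$.

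The first step is to check well-definedness against the three relations in play. At $u=0$ the $CY$-coordinate is $[c_Y,y,0]=c_Y$ independently of $y$, matching the join relation $(x,y,0)\sim(x,y',0)$; symmetrically at $u=1$ the $CX$-coordinate collapses, matching $(x,y,1)\sim(x',y,1)$; and at $\lambda=0$ both coordinates become the respective cone points, so every cone-collapsed class goes to $(c_X,c_Y)$. Continuity then follows by the pasting lemma, since the two branches agree on $u=1/2$, both giving $([c_X,x,\lambda],[c_Y,y,\lambda])$.

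For bijectivity I would exhibit an explicit inverse. Given $([c_X,x,s],[c_Y,y,t])$, set $\lambda=\max(s,t)$; if $\lambda=0$ map to the cone point of $C(X\ast Y)$, otherwise put $u=t/(2s)$ when $s\geq t$ and $u=1-s/(2t)$ when $t\geq s$ (the two prescriptions coincide when $s=t$, giving $u=1/2$). A short case check shows this is a two-sided inverse on equivalence classes, accounting for the collapses on both sides.

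To finish, note that $X\ast Y$ is compact as a quotient of $X\times Y\times I$, hence $C(X\ast Y)$ is compact; and since $X,Y$ are compact Hausdorff, the cones $CX,CY$ are Hausdorff (collapse of a closed subset of a compact Hausdorff space), whence so is the product $CX\times CY$. A continuous bijection from a compact space to a Hausdorff space is automatically closed, hence a homeomorphism, concluding the proof. The only subtle step will be verifying that the piecewise formula genuinely descends to both quotients; once the relations are checked carefully, everything else is routine.
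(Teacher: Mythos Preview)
Your proposal is correct and follows essentially the same route as the paper: an explicit piecewise formula followed by the compact--to--Hausdorff argument. Your formula is exactly the paper's $\varphi([c,[x,y,t],\lambda])=([c,x,2\lambda\min\{t,1/2\}],[c,y,2\lambda\min\{1-t,1/2\}])$ up to the reparametrization $t\leftrightarrow 1-t$ (and in fact your orientation is the one consistent with the join relations as stated in the paper); you simply supply more detail---the well-definedness checks and an explicit inverse---where the paper is terse.
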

	\begin{proof} We follow the proof of \cite[\textbf{Lemma~8.1}]{bbcg2}. We can represent a point in $C(X\ast Y)$ by $[c,[x,y,t],\lambda]$. We define the homeomorphism $\varphi$ by
	\begin{equation*}
	\varphi([c,[x,y,t],\lambda])=([c,x,2\lambda\cdot \min\{t,1/2\}],[c,y,2\lambda\cdot \min\{1-t,1/2\}])\in CX\times CY,
	\end{equation*}
where the cone point is at $\lambda=0$. At $\lambda=1$, $\varphi$ reduces to the usual homeomorphism
\begin{equation*}
	X\ast Y\cong (CX\times Y)\cup (X\times CY).
\end{equation*}
The map $\varphi$ is a homeomorphism as a continuous bijection from the compact space  $C(X\ast Y)$ to the Hausdorff space $CX \times CY$.
	\end{proof}

	\section{Combinatorial tools}\label{three}
	
	One of the main goals of this section is to embed the simplicial complex $K$ in a bigger simplex with vertex set $[(k+1)m]$. We start by introducing a linearized version of the join of spaces.
	
\begin{definition }[Geometrically joinable]\text{\ }

	\begin{itemize}
	\item Two compact subspaces $X$ and $Y$ of $\R^n$ are said to be \Def{geometrically joinable} if whenever $x,x'\in X$, $y,y'\in Y$ and $\lambda,\lambda'\in I$ are such that $\lambda x+(1-\lambda)y=\lambda' x'+(1-\lambda')y'$,
	then we have one of the three following possibilities
	\begin{itemize}
	\item $\lambda=\lambda'=0$, and so $y=y'$;
	\item $\lambda=\lambda'=1$, and so $x=x'$;
	\item $0\neq \lambda=\lambda'\neq 1$, $x=x'$ and $y=y'$.
	\end{itemize}
	\item More generally, $p$ compact subspaces $X_1,\cdots,X_p\subseteq \R^n$ are \Def{geometrically joinable} if whenever we have an equality between two convex combinations of points of $X_1,\cdots,X_p$, that is, whenever
	\begin{equation*}
	\sum_{i=1}^{p}\lambda_i x_i=\sum_{i=1}^{p}\lambda_i' x_i',
	\end{equation*}
	for some $x_i,x_i'\in X_i$ and $\lambda_i,\lambda_i'\in I$ with $\sum_{i=1}^{p}\lambda_i=1=\sum_{i=1}^{p}\lambda_i'$, then for all $i=1,\cdots,p$ such that $\lambda_i\neq 0$ or $\lambda_i'\neq 0$, we have $\lambda_i=\lambda_i'$ and $x_i=x_i'$.
	\item If $p$ compact subspaces $X_1,\cdots,X_p\subseteq \R^n$ are geometrically joinable, then we define their \Def{geometric join} $\bbast_{i=1}^p X_i$ to be the set of all convex combinations of elements of $X_i$, that is,
	\begin{equation*}
	\bbast_{i=1}^p X_i = \left\lbrace \sum_{i=1}^{p}\lambda_i x_i\in \R^n: x_i\in X_i\ \text{and}\ \lambda_i\in I\ \text{such that}\ \sum_{i=1}^{p}\lambda_i=1\right\rbrace.	
	\end{equation*}
	\end{itemize}
\end{definition }
The notion of geometrically joinable introduced here is also called \textquotedblleft in general position\textquotedblright. In the following, when we use the notation $X\overline{\ast} Y$, it is to be understood that $X$ and $Y$ are indeed geometrically joinable.
	\begin{example} Single points $X_1 = \{x_1\},\cdots,X_p = \{x_p\}$ in $\R^n$ are geometrically joinable if and only if they are affinely independent. In that case, their geometric join $\bbast_{i=1}^p X_i$ is their convex hull, which is a $(p-1)$-simplex, that is,
		\begin{equation*}
		\bbast_{i=1}^p X_i=\text{cx}\{x_1,\cdots,x_p\}\cong \Delta^{p-1}.
		\end{equation*}
	\end{example}
	
	\begin{lemma}\text{\ }
	
	\begin{enumerate}
	\item Let $X$ and $Y$ be geometrical joinable subspaces of $\R^n$. The map 
	\begin{align*}
	\Phi: X\ast Y & \to X\overline{\ast} Y \\
	[x,y,\lambda] & \mapsto \lambda x+(1-\lambda)y
	\end{align*}
	is a homeomorphism.
	\item More generally, for geometrically joinable subspaces $X_1,\cdots, X_p$ of $\R^n$, the map
	\begin{align*}
	\Phi_p: \bast_{i=1}^p X_i & \to \bbast_{i=1}^p X_i \\
	[x_i,\lambda_i]_{i=1}^p & \mapsto \sum_{i=1}^{p}\lambda_i x_i
	\end{align*}
	is a homeomorphism.
	\end{enumerate}
	\end{lemma}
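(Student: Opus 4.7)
The plan is to establish both parts via the standard principle that a continuous bijection from a compact space to a Hausdorff space is a homeomorphism. Since (1) is just the case $p=2$ of (2), I would prove the general statement once and then specialize.

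The first step is to check that the formula $\sum_{i=1}^{p}\lambda_i x_i$ descends to a well-defined continuous map on $\bast_{i=1}^p X_i$. One realizes the $p$-fold join as the quotient of $X_1\times\cdots\times X_p\times\Delta^{p-1}$, where $\Delta^{p-1}$ parametrizes the coefficient tuples $(\lambda_1,\ldots,\lambda_p)$, modulo the relation that collapses the $X_i$-coordinate whenever $\lambda_i=0$. The value $\sum \lambda_i x_i$ depends only on the $\lambda_i$ and on those $x_i$ with $\lambda_i\neq 0$, so the factorization through the quotient exists, and continuity is inherited from continuity on the product by the universal property of the quotient topology.

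Next, surjectivity of $\Phi_p$ is tautological from the definition of $\bbast_{i=1}^p X_i$ as the set of all convex combinations. Injectivity is exactly what the geometric joinability hypothesis was designed to deliver: an equality $\sum \lambda_i x_i = \sum \lambda_i' x_i'$ forces $\lambda_i=\lambda_i'$ for every $i$ and $x_i=x_i'$ wherever this common coefficient is nonzero, and this is precisely the equivalence relation on the join. Finally, $\bast_{i=1}^p X_i$ is compact as a quotient of the compact space $\prod_{i=1}^p X_i\times\Delta^{p-1}$, while $\bbast_{i=1}^p X_i \subseteq \R^n$ is Hausdorff, so $\Phi_p$ is a homeomorphism and (2) follows; part (1) is the case $p=2$.

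The step I expect to be most delicate is the first one: correctly encoding the $p$-fold join as a single quotient and matching its equivalence relation to the three-case description given for $p=2$ (so that the map is manifestly well-defined on representatives). Once that bookkeeping is in place, injectivity, surjectivity, and the compact-to-Hausdorff closing argument all follow without further difficulty.
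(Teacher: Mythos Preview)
Your argument is correct, and in fact the paper states this lemma without proof, so there is nothing to compare against: the compact-to-Hausdorff strategy you outline (well-definedness on the quotient, surjectivity by definition of $\bbast$, injectivity from the geometric joinability hypothesis) is exactly the intended and standard justification, consistent with how the paper handles similar homeomorphisms elsewhere (e.g.\ \textbf{Lemma~\ref{psi}} and \textbf{Lemma~\ref{bbcg}}).
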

	
	\begin{remark}\label{gjss}
		Observe that if $U$ and $V$ are respective subspaces of geometrically joinable spaces $X$ and $Y$, then $U$ is geometrically joinable to $V$.
	\end{remark}
	
	\begin{lemma}\label{gjlem}
		Let  $X, Y_1$ and $Y_2$ be three compact subspaces of $\R^n$. If $X$ is geometrically joinable to each $Y_i$ and 
		\begin{equation}\label{int2}
			X\overline{\ast} Y_1\cap X\overline{\ast} Y_2 = X\overline{\ast} \left( Y_1\cap Y_2 \right),
		\end{equation}
		then $X$ is geometrically joinable to $Y_1\cup Y_2$.
		
		%		Let $k\in \N$ and $X, Y_1,\cdots,Y_k$ compact subspaces of $\R^n$. If $X$ is geometrically joinable to each $Y_i$ and $\cap_{i=1}^k (X\ast Y_i) = X\ast \left( \cap_{i=1}^k Y_i \right)$ then $X$ is geometrically joinable to $\cup_{i=1}^k Y_i$.
	\end{lemma}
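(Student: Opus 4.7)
The plan is to verify the definition of geometric joinability for $X$ and $Y_1 \cup Y_2$ head-on. Take points $x,x' \in X$, $y,y' \in Y_1 \cup Y_2$ and $\lambda,\lambda' \in I$ with
\[
\lambda x + (1-\lambda) y \;=\; \lambda' x' + (1-\lambda') y',
\]
call this common point $p$, and show that one of the three disjunctive conclusions in the definition must hold. If $y,y'$ happen to lie in a common $Y_i$, the conclusion is immediate from the assumed joinability of $X$ with $Y_i$. So the only interesting case is, after relabeling, $y \in Y_1$ and $y' \in Y_2$.

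In that case, $p \in X\overline{\ast}Y_1$ (expression via $\lambda,x,y$) and $p \in X\overline{\ast}Y_2$ (expression via $\lambda',x',y'$). The intersection hypothesis (\ref{int2}) then places $p$ in $X\overline{\ast}(Y_1\cap Y_2)$, so we may write
\[
p \;=\; \mu x'' + (1-\mu) y'', \qquad x''\in X,\ y''\in Y_1\cap Y_2,\ \mu\in I.
\]
This is the conceptual heart of the argument: the hypothesis is exactly what produces a \emph{bridge} expression for $p$ whose $y''$-term lies in both $Y_1$ and $Y_2$, letting us split the problem into two subproblems each governed by one of the given joinability hypotheses.

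Next I would apply geometric joinability of $X$ with $Y_1$ to the equality $\lambda x + (1-\lambda) y = \mu x'' + (1-\mu) y''$ (all $y$-terms in $Y_1$), and of $X$ with $Y_2$ to $\lambda' x' + (1-\lambda') y' = \mu x'' + (1-\mu) y''$ (all $y$-terms in $Y_2$). Each invocation returns one of three mutually exclusive situations, characterized by the triple $(\mu$ equals $0$, $1$, or strictly between$)$ and forcing the corresponding $\lambda$ or $\lambda'$ to agree with $\mu$. A short nine-way case analysis on the combinations of outcomes eliminates all combinations in which $\mu$ is forced to take two incompatible values, leaving precisely three surviving scenarios: both yield $\mu=0$, both yield $\mu=1$, or both yield $0\neq \mu \neq 1$. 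Tracking back the conclusions on $(\lambda, x, y)$ and $(\lambda', x', y')$ in each of these three surviving cases gives exactly the three clauses in the definition of geometric joinability of $X$ with $Y_1\cup Y_2$; in particular in the first and third cases $y=y''=y'$ lands in $Y_1\cap Y_2$.

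The main obstacle is simply the bookkeeping of this case analysis: keeping track of the three $\mu$-regimes on each side and ruling out the six crossed combinations by cross-comparing the forced values of $\mu$. Nothing deeper is required, and the argument never needs joint information about $Y_1\cup Y_2$ beyond what the intersection hypothesis packages into the single auxiliary point $\mu x'' + (1-\mu)y''$.
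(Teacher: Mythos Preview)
Your proposal is correct and follows the same route as the paper: reduce to the cross case $y\in Y_1$, $y'\in Y_2$, use the intersection hypothesis to obtain a bridge expression $p=\mu x''+(1-\mu)y''$ with $y''\in Y_1\cap Y_2$, and compare each original expression against this bridge via the two given joinabilities. The only cosmetic difference is that the paper cases directly on whether $\lambda''$ (your $\mu$) equals $0$, $1$, or lies strictly between, rather than framing it as a $3\times 3$ grid that collapses to those three cases.
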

	
	\begin{proof}  Let $x, x'\in X$, $w,w'\in Y_1\cup Y_2$ and $\lambda,\lambda'\in I$ be such that
		\begin{equation}\label{gj2}
			\lambda x +(1-\lambda) w=\lambda' x'+(1-\lambda')w'.
		\end{equation}
		If we have either $w,w'\in Y_1$ or $w,w'\in Y_2$, then there is nothing to show since $X$ is geometrically joinable to both $Y_1$ and $Y_2$. Without lost of generality suppose $w\in Y_1$ and $w'\in Y_2$, and so $(\ref{gj2})$ gives us
		\begin{equation*}
			X\overline{\ast} Y_1\ni \lambda x +(1-\lambda) w=\lambda' x'+(1-\lambda')w'\in X\overline{\ast} Y_2.
		\end{equation*}
		Then by $(\ref{int2})$, there are $x''\in X$, $w''\in Y_1\cap Y_2$ and $\lambda''\in I$ such that 
		\begin{align}
			\lambda x +(1-\lambda) w = & \lambda'' x''+(1-\lambda'')w''\label{gjw1},\\
			\lambda' x'+(1-\lambda')w' = & \lambda'' x''+(1-\lambda'')w''\label{gjw2}.
		\end{align}
		
		If $\lambda''=0$ (respectively $\lambda''=1$) then
		\begin{itemize}
			\item $\lambda=0$ (respectively $\lambda=1$) and $w=w''$ (respectively $x=x''$)  by $(\ref{gjw1})$, since $X$ and $Y_1$ are geometrically joinable.
			\item $\lambda'=0$ (respectively $\lambda'=1$) and $w'=w''$ (respectively $x'=x''$) by $(\ref{gjw2})$, since $X$ and $Y_2$ are geometrically joinable.
		\end{itemize}
		Thus $\lambda=0=\lambda'$ and $w=w'$ (respectively $\lambda=1=\lambda'$ and $x=x'$).
		
		If $0\neq \lambda''\neq 1$ then
		\begin{itemize}
			\item $\lambda=\lambda''$ and, $x=x''$ and $w=w''$ by $(\ref{gjw1})$, since $X$ and $Y_1$ are geometrically joinable.
			\item $\lambda'=\lambda''$ and, $x'=x''$ and $w'=w''$ by $(\ref{gjw2})$, since $X$ and $Y_2$ are geometrically joinable.
		\end{itemize}
		Thus $\lambda=\lambda'$ and, $x=x'$ and $w=w'$. Therefore $X$ is geometrically joinable to $Y_1\cup Y_2$.	
	\end{proof}

	\begin{notation} Now let $n=(k+1)m$ and for $i\in [m]$, consider the following notations:
		
		\begin{itemize}
			\item $v_i^\ell=e_{(k+1)(i-1)+\ell}$, for any $\ell=1,\cdots , k+1$, 
			\item $a_i=\dfrac{1}{k+1}\sum_{\ell=1}^{k+1}v_i=\text{bar}\{v_i^\ell\}_{\ell=1}^{k+1}$, that is, $a_i$ is the barycenter of $\{v_i^\ell\}_{\ell=1}^{k+1}$,
			\item $\Delta_i=\text{cx}\left\lbrace v_i^\ell\right\rbrace_{\ell=1}^{k+1}$,
			\item $S_i=\partial \Delta_i$.
		\end{itemize}
	\end{notation}
	
	\begin{lemma}\label{gji} For any subset $\sigma\subseteq [m]$, the collections $\{\Delta_i\}_{i\in \sigma}$, $\{S_i\}_{i\in \sigma}$ and $\{a_i\}_{i\in \sigma}$ are respectively families of geometrically joinable compact subspaces of $\R^n$.
	\end{lemma}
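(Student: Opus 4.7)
The plan is to reduce everything to the first claim, namely that the family $\{\Delta_i\}_{i\in\sigma}$ is geometrically joinable, and then deduce the other two claims as a corollary of \textbf{Remark \ref{gjss}} (extended from two to finitely many spaces by the same one-line argument), since $S_i\subseteq \Delta_i$ and $a_i\in \Delta_i$ for each $i$. Compactness of each $\Delta_i$, $S_i$ and $\{a_i\}$ is immediate since they are closed and bounded in $\R^n$, so the only substantive point is the geometric joinability of the $\Delta_i$'s.

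The key observation is that the $n=(k+1)m$ coordinates of $\R^n$ split into $m$ disjoint blocks $B_i=\{(k+1)(i-1)+1,\cdots,(k+1)i\}$, one for each $i\in [m]$, and by construction $\Delta_i=\text{cx}\{v_i^1,\cdots,v_i^{k+1}\}$ lies entirely in the coordinate subspace $\R^{B_i}:=\text{span}\{e_j:j\in B_i\}$. Moreover, since each $v_i^\ell$ is a standard basis vector, any $x_i\in \Delta_i$ has coordinates in $B_i$ summing to exactly $1$, and zero coordinates outside $B_i$.

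With this in hand, suppose
\begin{equation*}
\sum_{i\in\sigma}\lambda_i x_i=\sum_{i\in\sigma}\lambda_i' x_i',
\end{equation*}
with $x_i,x_i'\in\Delta_i$, $\lambda_i,\lambda_i'\in I$ and $\sum\lambda_i=\sum\lambda_i'=1$. Projecting both sides onto $\R^{B_j}$ (i.e.\ reading off the coordinates in block $B_j$) kills every term except the $j$-th, giving $\lambda_j x_j=\lambda_j' x_j'$ as vectors in $\R^{B_j}$ for each $j\in\sigma$. Summing the coordinates of this vector in $B_j$ yields $\lambda_j=\lambda_j'$ (using that the coordinates of $x_j$ and $x_j'$ in $B_j$ each sum to $1$). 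If this common value is nonzero, dividing gives $x_j=x_j'$. Hence the defining condition of geometric joinability is verified, which proves the first claim. The second and third claims then follow from $S_i\subseteq\Delta_i$ and $\{a_i\}\subseteq\Delta_i$ together with the (iterated) \textbf{Remark \ref{gjss}}.

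The argument is essentially bookkeeping: no serious obstacle is expected, though one must be a little careful to record that the block-$j$ coordinates of $x_j$ sum to $1$ (this uses that $\Delta_j$ is spanned by standard basis vectors, not arbitrary affinely independent points) in order to extract $\lambda_j=\lambda_j'$ from the block-$j$ equation.
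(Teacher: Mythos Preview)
Your proposal is correct and follows essentially the same approach as the paper: the paper expands $x_i=\sum_\ell t_i^\ell v_i^\ell$ and uses affine independence of the $v_i^\ell$ to obtain $\lambda_i t_i^\ell=\lambda_i' s_i^\ell$, then sums over $\ell$ to get $\lambda_i=\lambda_i'$ --- which is exactly your ``project onto the block $B_j$ and sum the coordinates'' argument in different language. Your deduction of the $S_i$ and $a_i$ cases via \textbf{Remark \ref{gjss}} is a bit more explicit than the paper's one-line assertions, but the content is the same.
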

	
	\begin{proof} Consider the following identity of convex combinations
		\begin{equation}\label{del1}
			\sum_{i=1}^{|\sigma|}\lambda_i x_i=\sum_{i=1}^{|\sigma|}\lambda_i' x_i',
		\end{equation}
		for some $x_i,x_i'\in \Delta_i$ and $\lambda_i,\lambda_i'\in I$ with $\sum_{i=1}^{k}\lambda_i=1=\sum_{i=1}^{k}\lambda_i'$. The equation $(\ref{del1})$ is equivalent to
		\begin{equation*}
		\sum_{i=1}^{|\sigma|}\sum_{\ell=1}^{k+1}\lambda_i t_i^\ell v_i^\ell=\sum_{i=1}^{|\sigma|}\sum_{\ell=1}^{k+1}\lambda_i' s_i^\ell v_i^\ell,
		\end{equation*}
		where $x_i=\sum_{\ell=1}^{k+1} t_i^\ell v_i^\ell$ and $x_i'=\sum_{\ell=1}^{k+1} s_i^\ell v_i^\ell$ are convex combinations. Since $\{v_i^\ell: i\in \sigma, \ell=1,\cdots,k+1\}$ is affinely independent, then $\lambda_i t_i^\ell=\lambda_i' s_i^\ell$, for all $i\in \sigma, \ell=1,\cdots,k+1$. Without loss of generality if $\lambda_i\neq 0$ (the proof is similar if we assume $\lambda_i'\neq 0$), then $t_i^\ell=\dfrac{\lambda_i'}{\lambda_i} s_i^\ell$. Hence we have
		\begin{align*}
		\sum_{\ell=1}^{k+1}t_i^\ell=1 & \Rightarrow  \sum_{\ell=1}^{k+1}\dfrac{\lambda_i'}{\lambda_i} s_i^\ell=1 \\
											 & \Rightarrow 	\dfrac{\lambda_i'}{\lambda_i}\sum_{\ell=1}^{k+1} s_i^\ell=1 \\
											 & \Rightarrow 	\dfrac{\lambda_i'}{\lambda_i}=1,\ \text{since}\ \sum_{\ell=1}^{k+1} s_i^\ell=1 \\
											& \Rightarrow \lambda_i'= \lambda_i \\
											& \Rightarrow t_i^\ell= s_i^\ell\text{, for all}\ i\in \sigma, \ell=1,\cdots,k+1 \\
											& \Rightarrow x_i=\sum_{\ell=1}^{k+1} t_i^\ell v_i^\ell=\sum_{\ell=1}^{k+1} s_i^\ell v_i^\ell=x_i' \\
											& \Rightarrow x_i=x_i'.
\end{align*}
Therefore the collection $\{\Delta_i\}_{i\in \sigma}$ is geometrically joinable. As the collection of boundaries of disjoint $k$-simplices $\Delta_i$ in $\R^n$ respectively,  $\{S_i\}_{i\in \sigma}$ is a family of geometrically joinable compact subspaces of $\R^n$. Likewise, the collection of barycenters $\{a_i\}_{i\in \sigma}$ of the $k$-simplices $\Delta_i$ is geometrically joinable.
	\end{proof}
	
	\begin{notation} For any $\sigma\subseteq [m]$ and by \textbf{Lemma \ref{gji}}, consider the setting
		\begin{itemize}
			\item $J(\sigma)=\cup_{i\in \sigma}\{(k+1)(i-1)+\ell\}_{\ell=1}^{k+1}\subseteq [n]$, 
			\item $\Delta_\sigma=\Delta(J(\sigma))= \bbast_{i\in \sigma}\Delta_i$,
			\item $S_\sigma=\bbast_{i\in \sigma}S_i$, 
			\item $S_\sigma^*=\bbast_{j\notin \sigma}S_j$,
			\item $a_\sigma=\text{cx}\{a_i:i\in \sigma\}$.
		\end{itemize}
	An example of this setup is illustrated in \textsc{Figure} \ref{n2m}.
	\end{notation}
	
	\begin{figure}[h]
		\begin{center}
			\includegraphics[scale=.8]{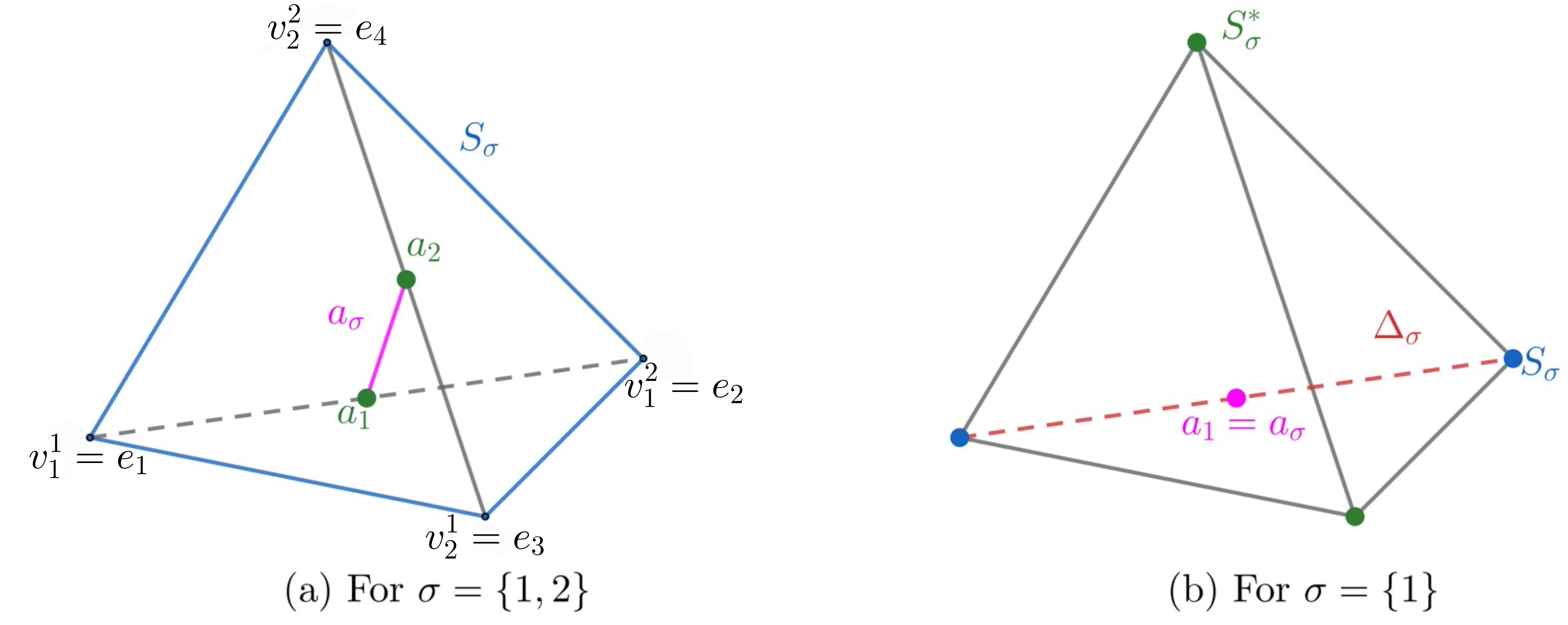}
		\end{center}
		\caption{Examples of $a_i$, $S_i$, $a_\sigma$, $S_\sigma$ and $S^*_\sigma$, for $m=2$ and $k=1$, that is $n=4$.}
		\label{n2m}
	\end{figure}
	
	\begin{lemma}\label{gjs}
		For $\sigma\subseteq [m]$, the compact spaces $a_\sigma$ and $S_\sigma$ are geometrically joinable, and we have
		\[\Delta_\sigma=a_\sigma\overline{\ast} S_\sigma.\]
	\end{lemma}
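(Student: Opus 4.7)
The plan is to reduce the claim to the already-established joinability of $\{\Delta_i\}_{i\in\sigma}$ from \textbf{Lemma \ref{gji}} by first handling the ``atomic'' case $|\sigma|=1$, which should read $\Delta_i=\{a_i\}\overline{\ast}S_i$, and then regrouping convex combinations to pass to general $\sigma$.

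First I would fix $i$ and write $x=\sum_{\ell=1}^{k+1}t_i^\ell v_i^\ell\in\Delta_i$ in its barycentric coordinates. Setting $\tau=(k+1)\min_\ell t_i^\ell\in[0,1]$, a short check gives the decomposition $x=\tau a_i+(1-\tau)u$ with $u\in S_i$ (at least one coordinate of $u$ is $0$). Uniqueness of $\tau$ follows because $a_i$ has all coordinates equal to $\tfrac{1}{k+1}$ while every $s\in S_i$ has a zero coordinate, so the minimum coordinate of $\tau a_i+(1-\tau)s$ equals $\tau/(k+1)$; this simultaneously proves that $\{a_i\}$ and $S_i$ are geometrically joinable with $\{a_i\}\overline{\ast}S_i=\Delta_i$.

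Next, for any $\mu(\sum_i\alpha_i a_i)+(1-\mu)(\sum_i\beta_i s_i)$ with $s_i\in S_i$, $\sum_i\alpha_i=\sum_i\beta_i=1$, reordering the summation gives
\[
\mu a+(1-\mu)s=\sum_{i\in\sigma}\big[\mu\alpha_i a_i+(1-\mu)\beta_i s_i\big]=\sum_{i\in\sigma}\lambda_i x_i,
\]
where $\lambda_i=\mu\alpha_i+(1-\mu)\beta_i$, $\sum_i\lambda_i=1$, and (when $\lambda_i>0$) $x_i=\lambda_i^{-1}[\mu\alpha_i a_i+(1-\mu)\beta_i s_i]\in\Delta_i$ by convexity. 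This exhibits the point in $\bbast_i\Delta_i=\Delta_\sigma$; conversely any $p\in\Delta_\sigma$ is uniquely $\sum_i\lambda_i x_i$ by \textbf{Lemma \ref{gji}}, and using the atomic step to write $x_i=\tau_i a_i+(1-\tau_i)u_i$ and then collecting the $a_i$-parts and the $u_i$-parts separately (with $\mu=\sum_i\lambda_i\tau_i$, $\alpha_i=\lambda_i\tau_i/\mu$, $\beta_i=\lambda_i(1-\tau_i)/(1-\mu)$) realizes $p$ in $a_\sigma\overline{\ast}S_\sigma$. Hence set-theoretically $\Delta_\sigma=a_\sigma\overline{\ast}S_\sigma$.

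For the joinability itself, suppose two such expressions coincide: $\mu a+(1-\mu)s=\mu'a'+(1-\mu')s'$. Expanding both sides as $\sum_i\lambda_i x_i$ and $\sum_i\lambda_i'x_i'$ and invoking the geometric joinability of $\{\Delta_i\}$ gives $\lambda_i=\lambda_i'$ and $x_i=x_i'$ whenever they are nonzero. Applying the atomic joinability inside each $\Delta_i$ then forces $\mu\alpha_i=\mu'\alpha_i'$ and $(1-\mu)\beta_i=(1-\mu')\beta_i'$; summing over $i$ collapses the first family to $\mu=\mu'$, and in the non-degenerate range $0<\mu<1$ the matching $\alpha_i=\alpha_i'$, $\beta_i=\beta_i'$ yield $a=a'$ and $s=s'$. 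The main bookkeeping obstacle is the degenerate indices where $x_i=a_i$ (so the boundary point $u_i$ from the atomic step is ambiguous) and the endpoint values $\mu\in\{0,1\}$; these are precisely the degeneracies permitted by the definition of geometric joinability, so careful but routine case-splitting suffices.
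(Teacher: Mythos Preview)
Your argument is correct and takes a genuinely different route from the paper's. The paper proceeds geometrically: it fixes the small case $k=1$, $\sigma=\{1,2\}$, decomposes $S_\sigma$ into its top-dimensional faces $F_1,\dots,F_4$, observes that $a_\sigma$ is joinable to each $F_j$ because the four relevant vertices are affinely independent, and then applies \textbf{Lemma~\ref{gjlem}} iteratively to conclude that $a_\sigma$ is joinable to the union $S_\sigma$; the identity $\Delta_\sigma=a_\sigma\overline{\ast}S_\sigma$ is then obtained by the one-line reshuffling $\bbast_i(a_i\overline{\ast}S_i)=(\bbast_i a_i)\overline{\ast}(\bbast_i S_i)$, with the general case declared ``similar''. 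Your approach bypasses \textbf{Lemma~\ref{gjlem}} entirely: you establish the atomic case $\Delta_i=\{a_i\}\overline{\ast}S_i$ by the explicit barycentric formula $\tau=(k+1)\min_\ell t_i^\ell$, and then leverage the already-proved joinability of $\{\Delta_i\}_{i\in\sigma}$ from \textbf{Lemma~\ref{gji}} to transport both the set equality and the joinability of $(a_\sigma,S_\sigma)$ from the atomic pieces to the full join, via the regrouping $\lambda_i=\mu\alpha_i+(1-\mu)\beta_i$.

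What each buys: the paper's argument is visually transparent and makes essential use of \textbf{Lemma~\ref{gjlem}}, but is only written out in one illustrative case. Your argument is uniform in $k$ and $|\sigma|$ from the start, does not need the intersection hypothesis of \textbf{Lemma~\ref{gjlem}}, and makes the reshuffling identity $\bbast_i(a_i\overline{\ast}S_i)=(\bbast_i a_i)\overline{\ast}(\bbast_i S_i)$ explicit rather than tacit. The price is the bookkeeping you flag at the end: when some $\lambda_i=0$ or some $\tau_i=1$ (equivalently $x_i=a_i$), the corresponding $s_i$ is undetermined, and one must check that such indices contribute nothing to $s$ because $\beta_i=0$ there; likewise the endpoints $\mu\in\{0,1\}$ need the one-line observations $s=s'$ (resp.\ $a=a'$) directly. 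These are indeed routine, but in a final write-up you should spell out at least one of them rather than defer all of them.
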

	\begin{proof}Let us prove $a_\sigma$ and $S_\sigma$ are geometrically joinable for $k=1$ and for $\sigma=\{1,2\}$; the general case can be proved similarly. In this setup, $S_\sigma$ can be split as follows
			\begin{equation*}
				S_\sigma=\underbrace{[e_1,e_3]}_{F_1}\cup \underbrace{[e_1,e_4]}_{F_2}\cup\underbrace{[e_2,e_3]}_{F_3}\cup\underbrace{[e_2,e_4]}_{F_4}\ \text{and}
			\end{equation*}   
			\[a_\sigma=\left[ \dfrac{1}{2}e_1+\dfrac{1}{2}e_2, \dfrac{1}{2}e_3+\dfrac{1}{2}e_4\right]. \]
The complexes $a_\sigma$ and $F_i$, for each $i=1,2,3,4$, are both $1$-simplices and all their four vertices are not coplanar. So $a_\sigma$ and $F_i$, for each $i=1,2,3,4$, are geometrically joinable and their join $a_\sigma\overline{\ast} F_i$ is a $3$-simplex. Also we have
			\begin{align*}
				a_\sigma \overline{\ast} (F_1\cap F_2) = & a_\sigma \overline{\ast} \{e_1\}\\
				= & (a_\sigma \overline{\ast} F_1) \cap (a_\sigma \overline{\ast} F_2).
			\end{align*}
			Hence by \textbf{Lemma \ref{gjlem}}, $a_\sigma$ is geometrically joinable to $F_1\cup F_2$. Similarly, we have
			\begin{align*}
				a_\sigma \overline{\ast} ((F_1\cup F_2)\cap F_3) = & a_\sigma \overline{\ast} \{e_3\}\\
				= & (a_\sigma \overline{\ast} (F_1\cup F_2)) \cap (a_\sigma \overline{\ast} F_3).
			\end{align*}
			Hence $a_\sigma$ is geometrically joinable to $F_1\cup F_2\cup F_3$. Similarly, we have
			\begin{align*}
				a_\sigma \overline{\ast} ((F_1\cup F_2\cup F_3)\cap F_4) = & a_\sigma \overline{\ast} \{e_2,e_4\}\\
				= & (a_\sigma \overline{\ast} (F_1\cup F_2\cup F_3)) \cap (a_\sigma \overline{\ast} F_4).
			\end{align*}
			Hence $a_\sigma$ is geometrically joinable to $S_\sigma=F_1\cup F_2\cup F_3\cup F_4$.
			
			 For any $i\in [m]$, $S_i=\partial\Delta_i$ and $a_i=\text{bar}\{v_i^\ell\}_{\ell=1}^{k+1}$. Then $a_i$ and $S_i$ are geometrically joinable, and we have $\Delta_i=a_i\overline{\ast} S_i$. We deduce
			\begin{align*}
				\Delta_\sigma & =\bbast_{i\in \sigma} \Delta_i \\
				& = \bbast_{i\in \sigma} (a_i\overline{\ast} S_i) \\
				& =( \bbast_{i\in \sigma} a_i)\overline{\ast} (\bbast_{i\in \sigma} S_i) \\
				& = a_\sigma \overline{\ast} S_\sigma. \qedhere
			\end{align*}

	\end{proof}
	
	\section{Proof of Theorem \ref{main}}\label{four}
	
	Now we have all the tools to write down the proof of  \textbf{Theorem \ref{main}} for $k\geq 1$. The case of $k=0$ will be treated in the next section. In the following, we consider the notations introduced in \textsc{Sections} 2 and 3.
	\begin{proof} Setting $W_\sigma=\Delta_\sigma\overline{\ast} S_\sigma^*$ for each $\sigma\in K$, we have
		\begin{equation}\label{w1}
			\bigcup_{\sigma\in K} W_\sigma=\bigcup_{\sigma\in K}\left( \left( \bbast_{i\in \sigma}\Delta_i\right) \overline{\ast}\left( \bbast_{j\notin \sigma}S_j\right) \right).
		\end{equation}
		Consider the collection of simplices $A(K) = \{a_{\sigma}\}_{\sigma \in K}$, which is a geometric realization of the simplicial complex $K$ and 
		\begin{equation}\label{k}
			|A(K)|=\bigcup_{\sigma\in K}a_\sigma
		\end{equation}
		to be the underlying subspace of $\R^n=\R^{(k+1)m}$. The complexes $a_{[m]}$ and $S_{[m]}$ are geometrically joinable by \textbf{Lemma \ref{gjs}}, and also $A(K)$ is a subcomplex of $a_{[m]}$. Therefore $|A(K)|$ and $S_{[m]}$ are geometrically joinable by \textbf{Remark \ref{gjss}}, and we have $S_{[m]}\overline{\ast} |A(K)|\cong S^{km-1} \ast |K| \cong \Sigma^{km}|K|$, since $S_{[m]}\cong S^{km-1}$. Then 
		\begin{align*}
			W_\sigma & = \Delta_\sigma\overline{\ast} S_\sigma^*\\
			& = a_\sigma \overline{\ast} (S_\sigma\overline{\ast} S_\sigma^*)\ \text{by \textbf{Lemma \ref{gjs}}}\\
			& = a_\sigma\overline{\ast} S_{[m]}.
		\end{align*}
	This is illustrated by \textsc{Figure} \ref{Ws}, where $W_\sigma$ is the union of the two blue triangular surfaces. In this example, we have $S_{[m]}\cong \partial W_\sigma$.
	\begin{figure}[h]
		\begin{center}
			\includegraphics[scale=.8]{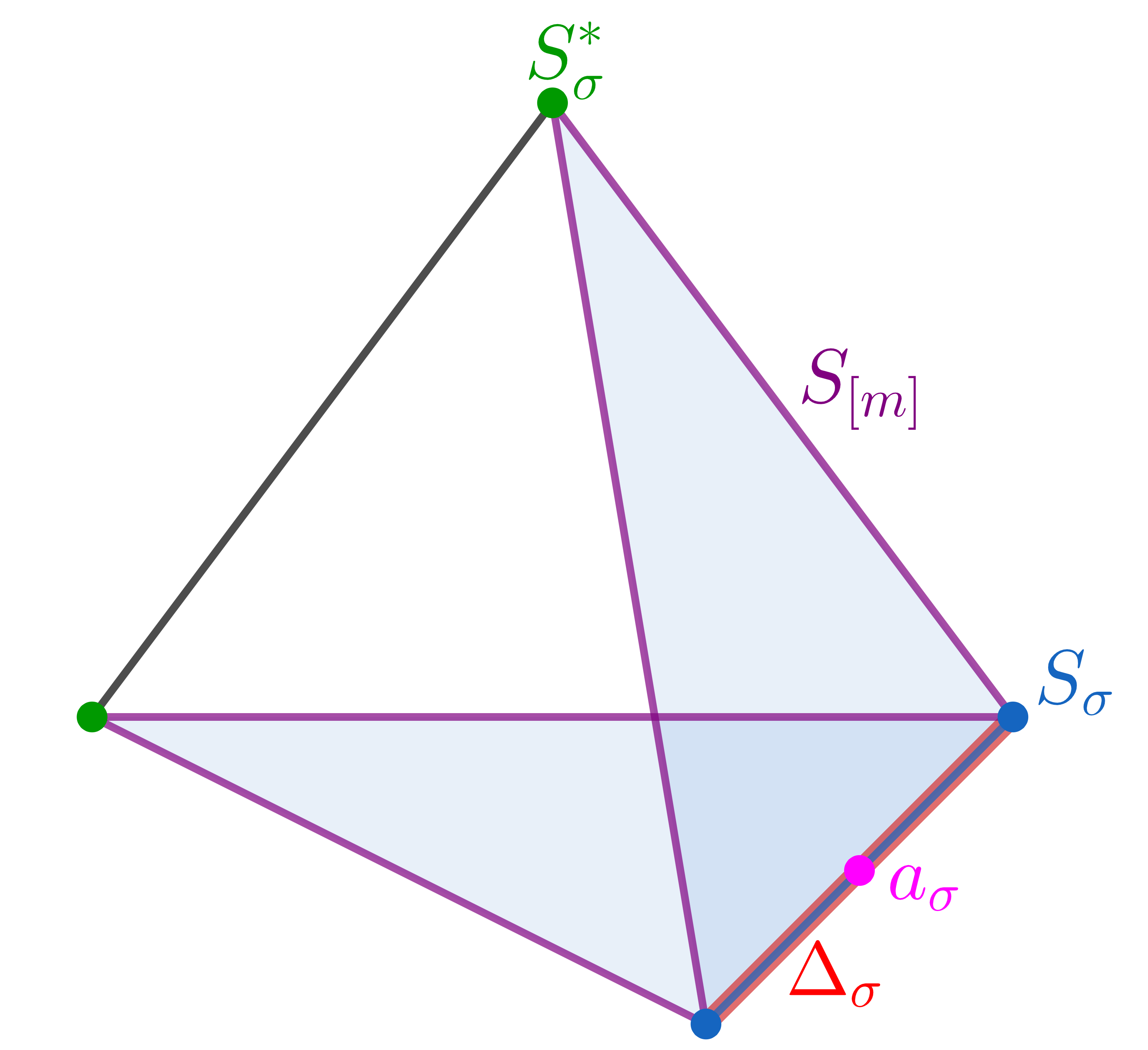}
		\end{center}
		\caption{Examples of $W_\sigma$, for $m=2$, $\sigma=\{1\}$ and $k=1$, that is $n=4$.}
		\label{Ws}
	\end{figure}
	
		Hence from equation (\ref{k}) we also have
		\begin{equation}\label{w2}
			\bigcup_{\sigma\in K} W_\sigma\cong \Sigma^{km}|K|.
		\end{equation}
		Therefore we have
		\begin{align}
			\Psi(C(\bigcup_{\sigma\in K} W_\sigma)) & \cong \bigcup_{\sigma\in K} \Psi(CW_\sigma) \nonumber\\
			& = \bigcup_{\sigma\in K} \Psi\left( C\left( \left( \bbast_{i\in \sigma}\Delta_i\right) \overline{\ast}\left( \bbast_{j\notin \sigma}S_j\right) \right)\right) \text{by (\ref{w1})}\nonumber \\
			& \cong \bigcup_{\sigma\in K} \Psi\left( \prod_{i\in \sigma}(C\Delta_i)\times \prod_{j\notin \sigma}(CS_j)\right) \text{by \textbf{Lemma \ref{bbcg}}}\nonumber\\
			& \cong \bigcup_{\sigma\in K} \Psi\left( \prod_{i\in \sigma}C_i^{k+1}\times \prod_{j\notin \sigma}\partial_- C_j^{k+1}\right)\ \text{by \textbf{Lemma \ref{psi}},} \label{pw1}\\
			& \qquad \text{where $C_i^{k+1}$ and $C_j^{k+1}$ are copies of the $(k+1)$-cube.} \nonumber
		\end{align}
		 Then we get
		\begin{align}
			\overline{\Psi}(\Sigma \bigcup_{\sigma\in K}W_\sigma) & \cong \bigcup_{\sigma\in K} \overline{\Psi}(\Sigma W_\sigma) \nonumber\\
			&  \cong \bigcup_{\sigma\in K}\ \omega\left( \prod_{i\in \sigma}C_i^{k+1}\times \prod_{j\notin \sigma}\partial_- C_j^{k+1}\right)  \text{by}\ (\ref{pw1})\ \text{and}\ \textbf{Lemma \ref{psi}}\nonumber\\
			&  \cong \bigcup_{\sigma\in K}\ \left(\bigwedge_{i\in \sigma}\widetilde{D}_i^{k+1}\wedge \bigwedge_{j\notin \sigma}\partial \widetilde{D}_i^{k+1}\right) \text{by \textbf{Remark \ref{om}}, where $\widetilde{D}_i^{k+1}$ are copies of}\nonumber\\
			& \qquad \text{the nonstandard $(k+1)$-disk $\widetilde{D}^{k+1}$ defined in \textbf{Lemma \ref{nsdisk}}.}\nonumber\\
			\overline{\Psi}(\Sigma \bigcup_{\sigma\in K}W_\sigma) & \cong \bigcup_{\sigma\in K}\ \left(\bigwedge_{i\in \sigma}D_i^{k+1}\wedge \bigwedge_{j\notin \sigma}S_j^k\right)\ \text{by \textbf{Lemma \ref{nsdisk}}, where $D_i^{k+1}$ and $S_j^{k}$ are copies of}\nonumber\\
			& \qquad \text{the $(k+1)$-disk $D^{k+1}$ and the $k$-sphere $S^k$ respectively}\nonumber\\
			 & \cong \widehat{Z}(K; (D^{k+1},S^k)). \label{pw2}
		\end{align}
		On the other hand, we obtain
		\begin{align}
			\overline{\Psi}(\Sigma \bigcup_{\sigma\in K}W_\sigma) & \cong \overline{\Psi}(\Sigma \Sigma^{km}|K|)\ \text{by (\ref{w2})} \nonumber\\
			& \cong \overline{\Psi}\left( \Sigma^{km+1}|K|\right) \nonumber\\
			& \cong \Sigma^{km+1}|K|\ \text{by \textbf{Lemma \ref{psi}}.} \label{pw3}
		\end{align}
		Therefore by $(\ref{pw2})$ and $(\ref{pw3})$, we have
		\begin{equation*}
			\widehat{Z}(K; (D^{k+1},S^k))\cong \Sigma^{km+1}|K|.
		\end{equation*}
	Hence we have the result.
	\end{proof}

	\section{The case $k=0$}\label{five}
	In the previous section, we have proved \textbf{Theorem \ref{main}} for $k\geq 1$. Here we prove the remaining case, namely $k=0$, given by the following.
	
	\begin{theorem}\label{zero} There is a homeomorphism
	\begin{equation*}
			\widehat{Z}(K; (D^{1},S^0))\cong \Sigma |K|.
	\end{equation*}
	\end{theorem}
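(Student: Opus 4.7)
My plan for the case $k=0$ is to realize both $\widehat{Z}(K;(D^1,S^0))$ and $\Sigma|K|$ as the colimit over $\textsc{Cat}(K)$ of a single common diagram of topological disks, and conclude by uniqueness of colimits. The key simplification peculiar to $k=0$ is that $S^0\wedge X\cong X$ for any pointed space $X$, so
$$\widehat{D}(\sigma)=\bigwedge_{i\in\sigma}D^1\wedge\bigwedge_{j\notin\sigma}S^0\cong\bigwedge_{i\in\sigma}D^1;$$
with the model $D^1=[0,2]$ of \textbf{Notation \ref{CD}}, this smash product is exactly $\widetilde{D}^{|\sigma|}=[0,2]^{|\sigma|}/\partial_+[0,2]^{|\sigma|}$ by \textbf{Remark \ref{om}}.

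Second, I would apply $\overline{\Psi}$ from \textbf{Lemma \ref{psi}}, taken on the standard simplex $\Delta^{|\sigma|-1}$ spanned by $\{e_i:i\in\sigma\}$, to obtain for each $\sigma\in K$ a homeomorphism $\overline{\Psi}_\sigma\colon\Sigma|\sigma|\xrightarrow{\cong}\widehat{D}(\sigma)$. The heart of the argument is promoting this pointwise homeomorphism to a natural isomorphism of $\textsc{Cat}(K)$-diagrams. Given $\rho\subseteq\sigma$ in $K$, the inclusion $\widehat{D}(\rho)\hookrightarrow\widehat{D}(\sigma)$ identifies $\widehat{D}(\rho)$ with the face of the cube $[0,2]^\sigma$ on which the coordinates indexed by $\sigma\setminus\rho$ vanish (the non-basepoint of $S^0$ being $0$). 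Because $\Psi$ is defined by radial scaling along the simplex, see equation $(\ref{Psidef})$, the cone on the subface $\Delta^{|\rho|-1}\subseteq\Delta^{|\sigma|-1}$ is sent to exactly that sub-cube, so $\overline{\Psi}_\sigma$ restricts on $\Sigma|\rho|$ to $\overline{\Psi}_\rho$; this yields the required naturality.

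Finally, passing to colimits over $\textsc{Cat}(K)$, and using both $|K|\cong\colim_\sigma|\sigma|$ and the fact that unreduced suspension commutes with colimits of CW complexes (being a quotient of a product in a convenient category), I conclude
$$\widehat{Z}(K;(D^1,S^0))=\colim_{\sigma\in K}\widehat{D}(\sigma)\cong\colim_{\sigma\in K}\Sigma|\sigma|\cong\Sigma|K|.$$
The main obstacle I anticipate lies in the naturality check, specifically in tracking the two suspension points of $\Sigma|\sigma|$ (the collapsed base $\Delta^{|\sigma|-1}$ and the cone vertex) consistently across $\sigma\in K$: under $\overline{\Psi}_\sigma$ they correspond respectively to the smash basepoint and to the origin $0\in\widetilde{D}^{|\sigma|}$, and one must verify both are preserved by every face inclusion so that the identification of colimits goes through as pointed spaces rather than merely as spaces.
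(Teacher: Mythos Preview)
Your proposal is correct and follows essentially the same route as the paper: both arguments establish that $\overline{\Psi}$ assembles into a natural isomorphism $\Sigma\Delta(-)\Rightarrow\widehat{D}(-)$ of $\textsc{Cat}(K)$-diagrams and then pass to colimits. The paper verifies naturality by an explicit coordinate computation showing $\Psi_\ell\circ\phi_1=\phi_2\circ\Psi_p$ for the zero-padding maps $\phi_1,\phi_2$, whereas you argue the same fact geometrically from the radial form of $\Psi$; your concern about tracking the two suspension points is precisely what that computation confirms.
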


	For the purpose of the proof, we will consider the categorical definition of the polyhedral smash product given by $(\ref{psp})$. As in the third bullet of \textbf{Notation \ref{ch}}, consider the functor 
	\begin{align*}
		\Delta: \textsc{Cat}(K) & \to \text{Top} \\
		\sigma  & \mapsto \Delta(\sigma)\cong \Delta^{|\sigma|-1}.
	\end{align*}
The geometric realization of $K$ is given by
\begin{equation}\label{grK}
	|K| = \colim_{\sigma \in K} \Delta(\sigma).
\end{equation}

	\begin{proof}
			Consider the composite functor $\Sigma \Delta$ given by
			\begin{align*}
				\Sigma \Delta: \textsc{Cat}(K) & \to \text{Top} \\
				\sigma  & \mapsto \Sigma \Delta(\sigma)\cong \Sigma \Delta^{|\sigma|-1}.
			\end{align*}
		Let $\tau \subseteq \sigma$ be a face inclusion in $K$, with $|\tau| = p \leq \ell = |\sigma|$. We will look at the case $\tau=\{1,\cdots,p\} \subseteq \{1,\cdots,\ell\}=\sigma$ for simplicity; the same argument works for the general case. Consider the two following maps

			      	  \begin{align*}
			      	  	\phi_1: C \Delta^{p-1} & \to C \Delta^{\ell-1} \\
			      	           [c,(t_1,\cdots,t_p),\lambda] & \mapsto [c,(t_1,\cdots,t_p,\underbrace{0,\cdots,0}_{\ell-p\ \text{times}}),\lambda]
			      	  \end{align*}  and 	     
		      \begin{align*}
		      		\phi_2: C^p & \to C^\ell \\
		      	(t_1,\cdots,t_p) & \mapsto (t_1,\cdots,t_p,\underbrace{0,\cdots,0}_{\ell-p\ \text{times}}).
		      \end{align*}
			    
	      The transformation $\overline{\Psi}:  \Sigma \Delta \Rightarrow \widehat{D}$, where the functor $\widehat{D}$ is defined by $(\ref{dcat})$ for the pair $(D^1,S^0)$, is a natural isomorphism if the left hand side  diagram commutes since it induces the commutative diagram on the right hand side below:
	      \begin{center}
	      	\begin{tabular}{ccc}
	      	\includegraphics[scale=.8]{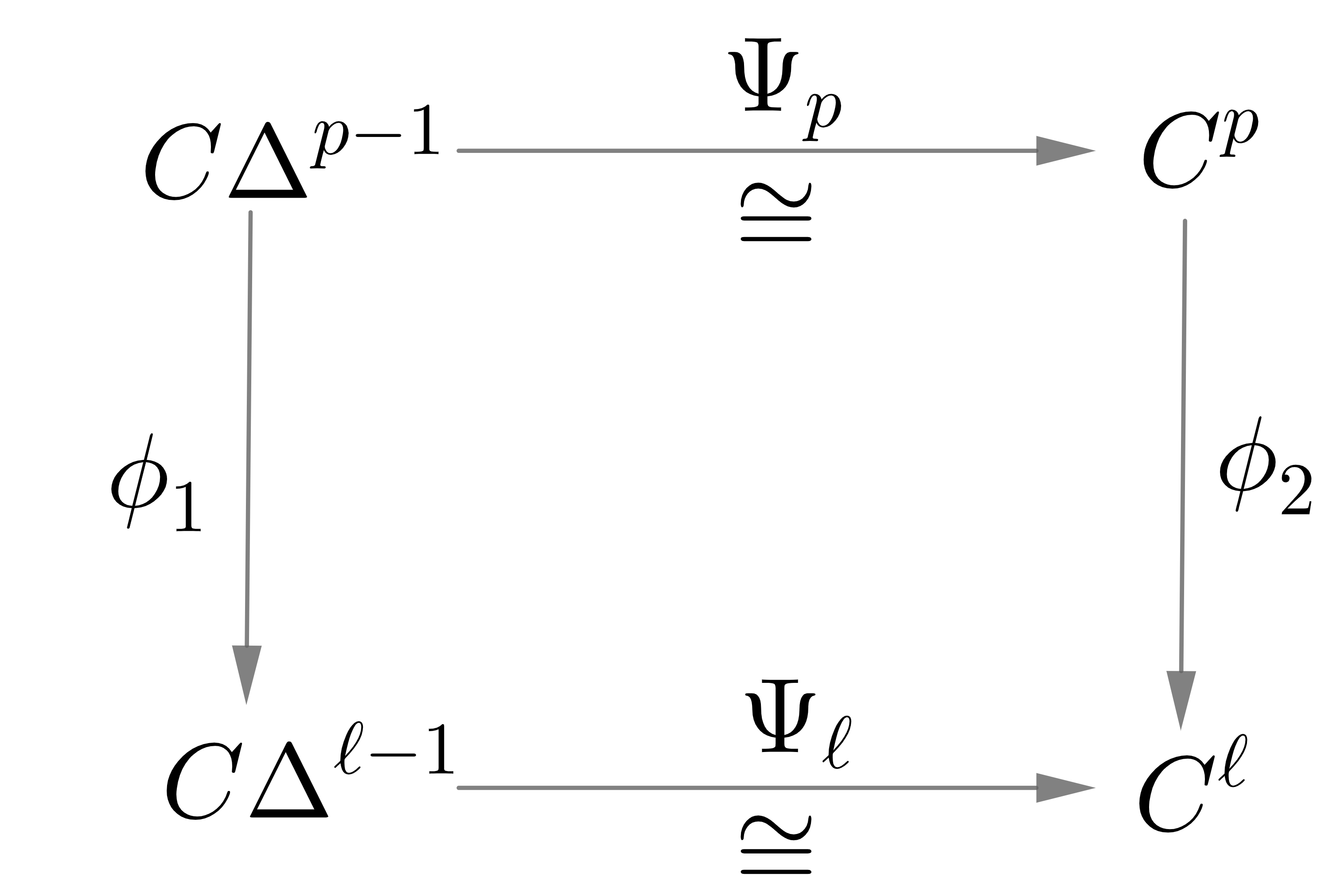} & \includegraphics[scale=.8]{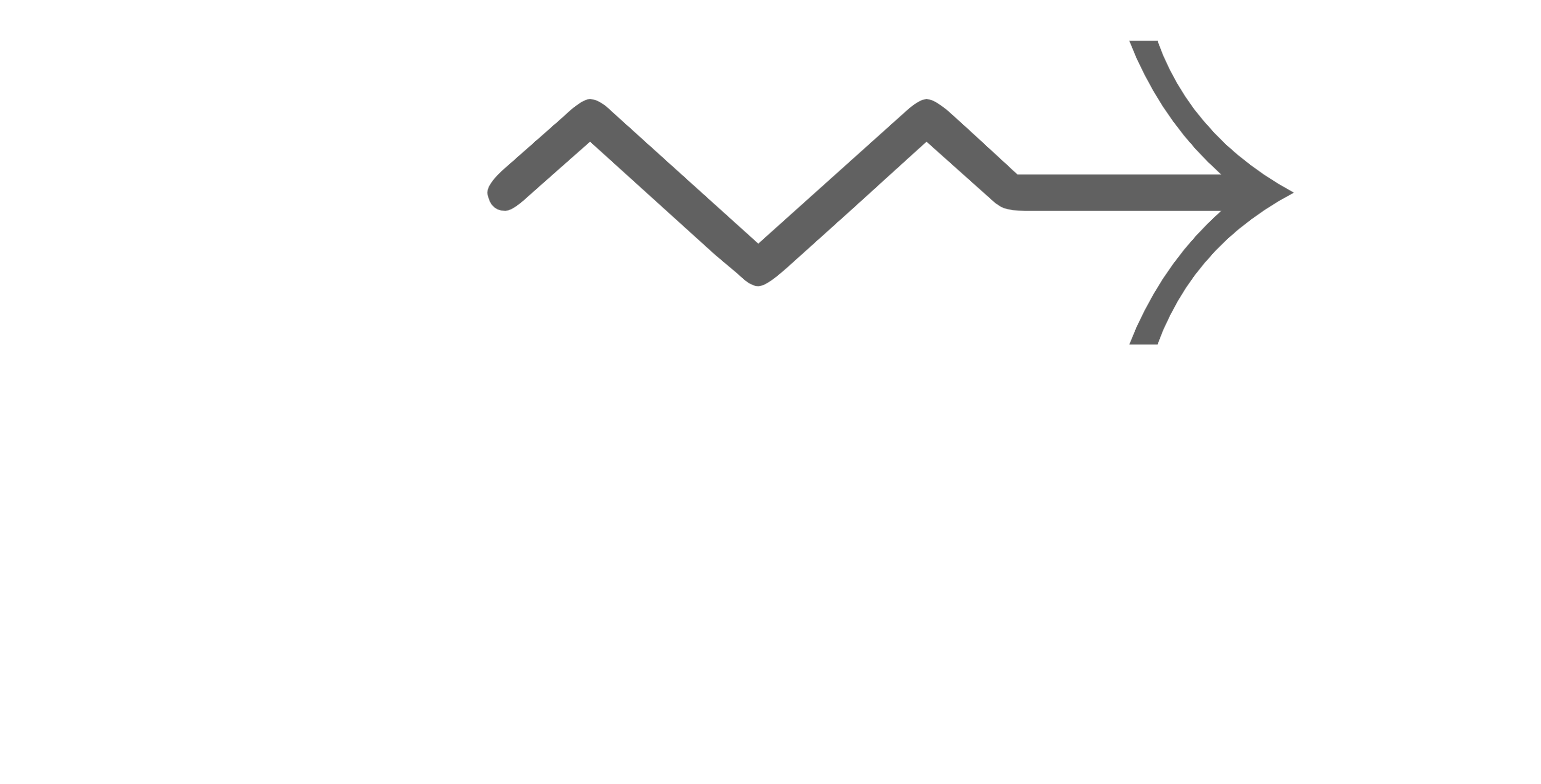} & \includegraphics[scale=.8]{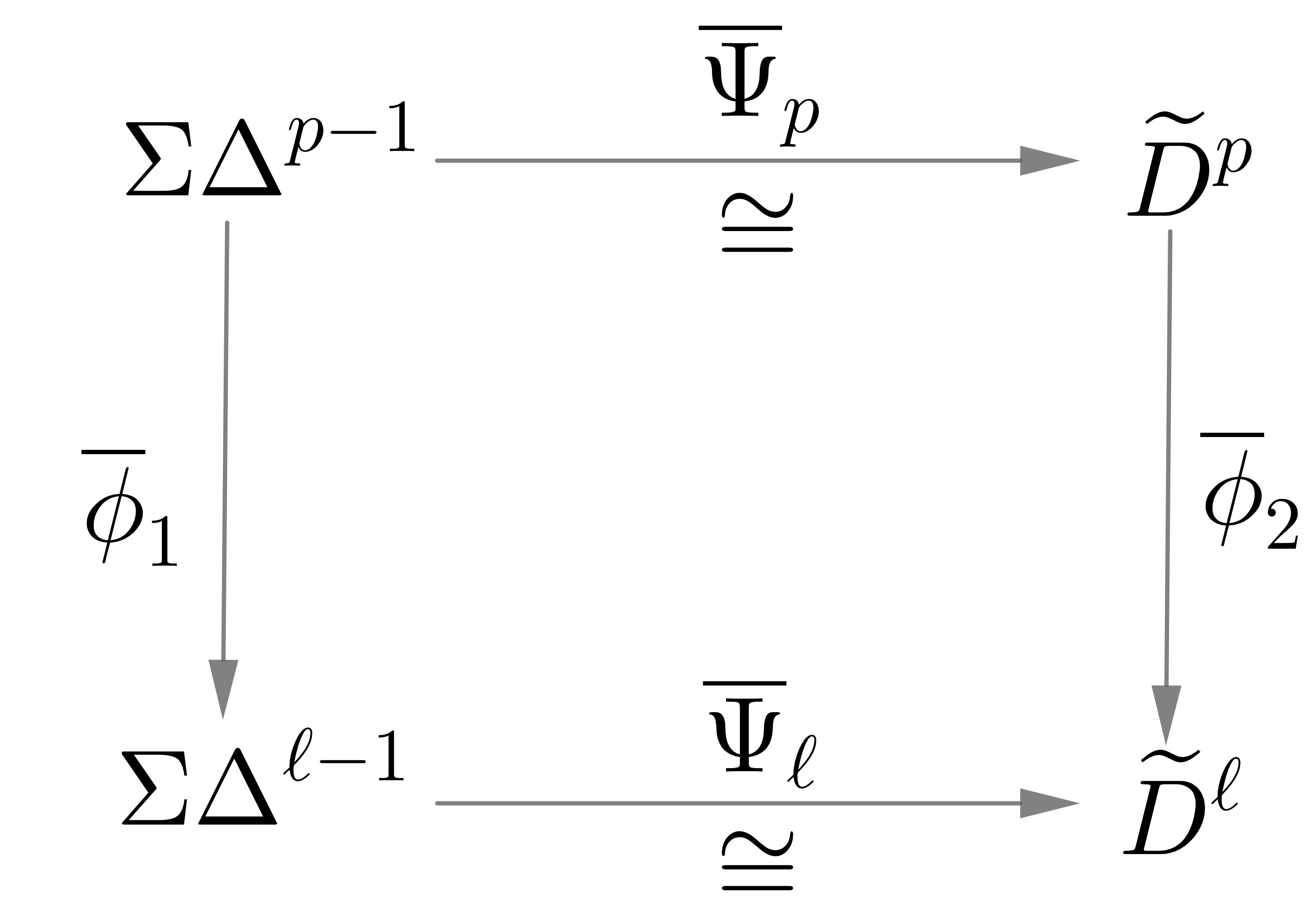}
	      \end{tabular}
	      \end{center}
	     where $\overline{\phi}_1([s_i,x,\lambda])=[\phi_1([c,x,\lambda])]$, for all $[s_i,x,\lambda]\in \Sigma \Delta^{p-1}$, $\widetilde{D}^p=\underbrace{C^1\wedge \cdots\wedge C^1}_{p\ \text{times}}$ and so\\
	      $\overline{\phi}_2([y])=[\phi_2(y)]$, for all $y\in C^{p}$. For $[c,(t_1,\cdots,t_p),\lambda]\in C \Delta^{p-1}$ we have 
	     
	      \begin{align*}
	      	\Psi_\ell \phi_1 ([c,(t_1,\cdots,t_p),\lambda]) & = 	\Psi_\ell([c,(t_1,\cdots,t_p,\underbrace{0,\cdots,0}_{\ell-p\ \text{times}}),\lambda]) \\
	      																	& = \begin{cases}
	      																		2\lambda (t_1,\cdots,t_p,\underbrace{0,\cdots,0}_{\ell-p\ \text{times}}),\ \text{if}\ 0\leq \lambda\leq \dfrac{1}{2}\\
	      																		\left( (2-2\lambda)+(2\lambda-1)\dfrac{2}{\max_{1\leq i\leq p} \{t_i\}}\right) (t_1,\cdots,t_p,\underbrace{0,\cdots,0}_{\ell-p\ \text{times}}),\ \text{if}\ \dfrac{1}{2}\leq \lambda\leq 1
	      																	\end{cases}\text{by (\ref{Psidef})}
      																	\end{align*}
      		\begin{align*}
      		 \phi_2 \Psi_p ([c,(t_1,\cdots,t_p),\lambda]) & = \begin{cases}
      		 	 \phi_2(2\lambda (t_1,\cdots,t_p)),\ \text{if}\ 0\leq \lambda\leq \dfrac{1}{2}\\
      		 	 \phi_2\left( \left( (2-2\lambda)+(2\lambda-1)\dfrac{2}{\max_{1\leq i\leq p} \{t_i\}}\right)  (t_1,\cdots,t_p)\right),\ \text{if}\ \dfrac{1}{2}\leq \lambda\leq 1\\
      	  	\end{cases}\text{by (\ref{Psidef})}\\
        	& = \begin{cases}
        		2\lambda (t_1,\cdots,t_p,\underbrace{0,\cdots,0}_{\ell-p\ \text{times}}),\ \text{if}\ 0\leq \lambda\leq \dfrac{1}{2}\\
        		\left( (2-2\lambda)+(2\lambda-1)\dfrac{2}{\max_{1\leq i\leq p} \{t_i\}}\right) (t_1,\cdots,t_p,\underbrace{0,\cdots,0}_{\ell-p\ \text{times}}),\ \text{if}\ \dfrac{1}{2}\leq \lambda\leq 1.
        	\end{cases}
	      \end{align*}
      Then the two diagrams commute and therefore $\overline{\Psi}$ is a natural isomorphism. Passing to the colimit, we have
      \begin{equation}\label{col}
     \colim \overline	{\Psi} : \colim_{\sigma\in K} \Sigma \Delta(\sigma) \stackrel{\cong}{\longrightarrow} \colim_{\sigma\in K} \widehat{D}(\sigma).
 \end{equation}
  But we have
  \[\colim_{\sigma\in K} \Sigma \Delta(\sigma) \cong \Sigma \colim_{\sigma\in K}  \Delta(\sigma) = \Sigma |K|\ \text{by (\ref{grK})}\]
   and also considering identity $(\ref{psp})$, the homeomorphism $(\ref{col})$ yields a homeomorphism
  \[\Sigma |K|\cong \widehat{Z}(K; (\underline{X},\underline{A})) .\qedhere\]
	  \end{proof}    

\section{Generalization of  Theorem \ref{main}}\label{six}

In this section, we generalize \textbf{Theorem \ref{main}} further, using an argument kindly provided by the referee. Instead of doubling all the vertices of $K$ simultaneously as in David Stone's original construction, we double one vertex at a time and argue inductively, starting from the case $k=0$ in \textbf{Theorem \ref{zero}}. Let us start by setting some notation and stating intermediate results. 

\begin{notation}\text{\ } 
	
	\begin{itemize}
	\item[$\bullet$] For $J=(j_1,\cdots,j_m)$ an $m$-tuple from $(\N\cup\{0\})^m$, denote the family of CW-pairs
 \[\left( \underline{D^{J+1}},\underline{S^J}\right) =\left\lbrace \left( D^{j_1+1}, S^{j_1}\right) ,\cdots,\left( D^{j_m+1},S^{j_m}\right) \right\rbrace .\]
	\item[$\bullet$] Set $J_i=(0,\cdots,1,\cdots,0)$ to be the $m$-tuple having $1$ only at the $i$-th position and $0$ elsewhere. For a simplicial complex $K$ over $[m]$ and $i\in [m]$, consider the new simplicial complex $K(J_i)$ with $m+1$ vertices labeled $\{1, \cdots, i-1, i_a, i_b, i+1, \cdots, m\}$ and defined by
\begin{align*}
	K(J_i) & :=\ \{(\sigma\backslash\{i\})\cup\{i_a, i_b\}\ |\ \sigma\in K\text{ and }i\in \sigma\}  \cup \{\sigma\cup\{i_a\}\ |\ \sigma\in K\text{ and }i\notin \sigma\}  \\
			  &\quad  \cup \{\sigma\cup\{i_b\}\ |\ \sigma\in K\text{ and }i\notin \sigma\} \cup \{\text{all their subsets}\}.
\end{align*}
	\end{itemize}
\end{notation} 
The meaning behind the introduction of $K(J_i)$ is illustrated in the following example.
\begin{example} For $m=2$, consider $K=\{\emptyset, \{1\},\{2\}\}$ and $J_1=(1,0)$. We have 
	\begin{align*}
		\widehat{Z}(K;(\underline{D^{J_1+1}},\underline{S^{J_1}})) & = \widehat{Z}(K;\{(D^{2},S^{1}), (D^{1},S^{0})\}) \\
		& = \widehat{D}(\{1\})\cup \widehat{D}(\{2\})\quad \text{as a subspace of}\ D^2 \wedge D^1 \\
		& = D^2\wedge S^0 \cup S^1\wedge D^1 \\
		& \cong (D^1\wedge D^1)\wedge S^0 \cup (D^1\wedge S^0 \cup S^0 \wedge D^1)\wedge D^1 \\ 
		& = \left( D^1\wedge D^1\wedge S^0\right)  \cup \left( D^1\wedge S^0\wedge D^1\right)  \cup \left( S^0 \wedge D^1\wedge D^1\right) \\ 
		& = \widehat{D}(\{1_a,1_b\})\cup\widehat{D}(\{2\}\cup\{1_a\})\cup\widehat{D}(\{2\}\cup\{1_b\}) \\
		& = \widehat{Z}(K(J_1);\{(D^{1},S^{0}),(D^{1},S^{0}), (D^{1},S^{0})\}) \\
		& = \widehat{Z}(K(J_1);(D^{1},S^{0})),\ \text{see}\ \textsc{Figure}\ \ref{e63}.
	\end{align*} 
\begin{figure}[h]
	\begin{center}
		\includegraphics[scale=.8]{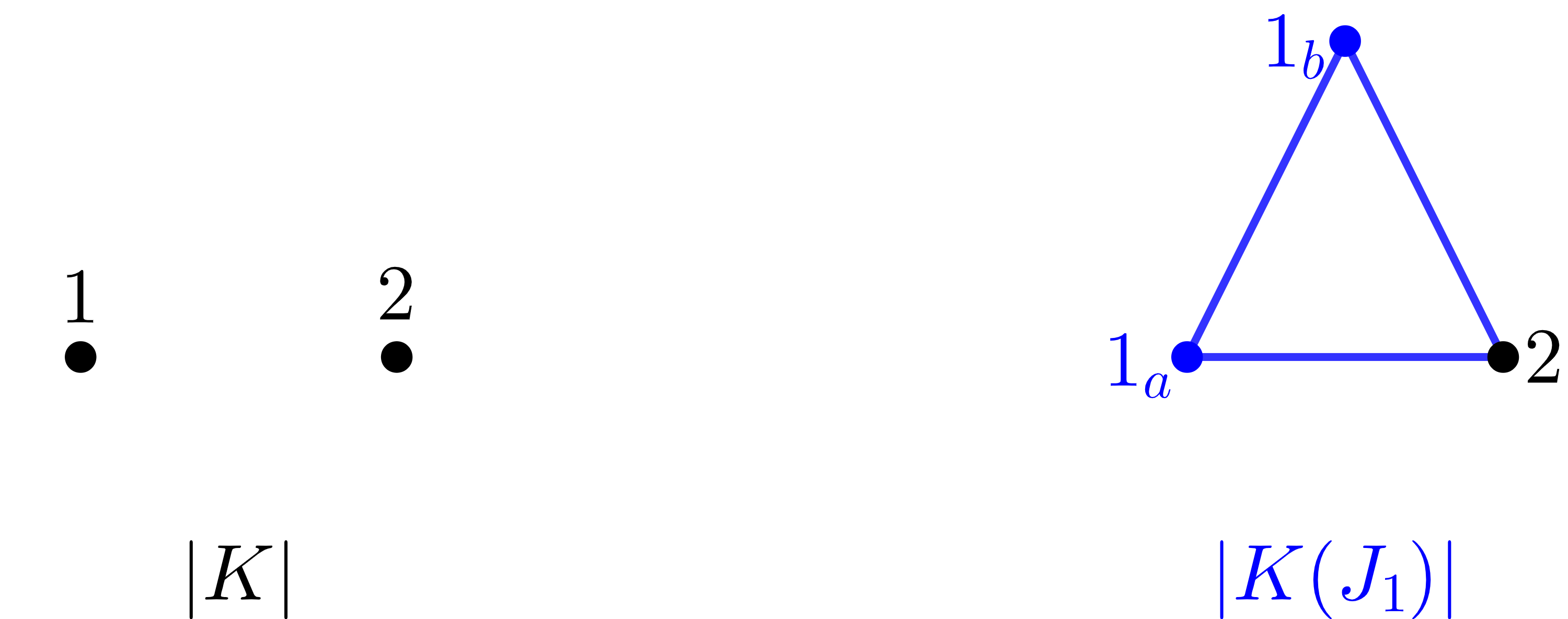}
	\end{center}
	\caption{$K(J_1) = \{\{1_a, 1_b\}, \{1_a, 2\}, \{1_b, 2\}, \text{their subsets}\}$.}
	\label{e63}
\end{figure}
\end{example}
The next lemma suggests that the polyhedral smash product $\widehat{Z}(K;(\underline{D^{J+1}},\underline{S^{J}}))$ can be computed iteratively with steps involving $K(J_i)$ for some $i\in [m]$.

\begin{lemma}\label{Ji} Let $J=(j_1,\cdots,j_m)$ to be an $m$-tuple and $i\in [m]$ such that $j_i\neq 0$. There is a homeomorphism
\begin{equation*}
	\widehat{Z}(K;(\underline{D^{J+1}},\underline{S^{J}}))\cong \widehat{Z}(K(J_i);(\underline{D^{J'+1}},\underline{S^{J'}})),
\end{equation*}
where $J'$ is the $(m+1)$-tuple $J'=(j_1,\cdots,j_i-1,0,\cdots,j_m)$.
\end{lemma}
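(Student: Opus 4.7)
The plan is to prove the homeomorphism termwise, matching the pieces $\widehat{D}(\sigma)$ on the left with a union of pieces $\widehat{D}(\tau)$ on the right (where $\tau\in K(J_i)$), and then taking the union over $\sigma\in K$. The key algebraic input is the smash-product identification
\begin{equation*}
D^{j_i+1}\ \cong\ D^{j_i}\wedge D^1, \qquad S^{j_i}\ \cong\ \bigl(S^{j_i-1}\wedge D^1\bigr)\cup \bigl(D^{j_i}\wedge S^0\bigr),
\end{equation*}
where the second identification refers to the image of $S^{j_i}$ inside $D^{j_i}\wedge D^1$ under the first. This identification is exactly what \textbf{Remark \ref{om}} provides via the cube model: writing $C^{j_i+1}=C^{j_i}\times C^1$, the quotient $\widetilde{D}^{j_i+1}=C^{j_i+1}/\partial_+ C^{j_i+1}$ becomes $\widetilde{D}^{j_i}\wedge\widetilde{D}^1$, and the inner boundary decomposes as $\partial_-C^{j_i+1}=(\partial_-C^{j_i}\times C^1)\cup(C^{j_i}\times\partial_-C^1)$, which descends to the required decomposition of $\partial\widetilde{D}^{j_i+1}$ after applying $\omega$.

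Next I would apply this identification in the $i$-th slot of each wedge summand $\widehat{D}(\sigma)$. If $i\in\sigma$, then the $i$-th factor $X_i=D^{j_i+1}$ is replaced by $D^{j_i}\wedge D^1$, and the resulting smash product is precisely $\widehat{D}'((\sigma\setminus\{i\})\cup\{i_a,i_b\})$, where $\widehat{D}'$ denotes the functor for the pairs indexed by $J'$ over $K(J_i)$. If $i\notin\sigma$, then the $i$-th factor $A_i=S^{j_i}$ becomes $(S^{j_i-1}\wedge D^1)\cup(D^{j_i}\wedge S^0)$, and distributing the surrounding smash product over this union yields
\begin{equation*}
\widehat{D}(\sigma)\ \cong\ \widehat{D}'(\sigma\cup\{i_b\})\ \cup\ \widehat{D}'(\sigma\cup\{i_a\}).
\end{equation*}
Finally, taking the union over all $\sigma\in K$ on both sides produces exactly the three families of top-dimensional simplices listed in the definition of $K(J_i)$, so the right-hand side is $\widehat{Z}(K(J_i);(\underline{D^{J'+1}},\underline{S^{J'}}))$.

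The step I expect to be most delicate is the distributivity of the smash product over the union in the case $i\notin\sigma$: smash products are quotients, and one must check that the canonical continuous bijection between $(\bigwedge_{j\neq i}Y_j)\wedge\bigl((S^{j_i-1}\wedge D^1)\cup(D^{j_i}\wedge S^0)\bigr)$ and the union of the two corresponding smash products is actually a homeomorphism. The cleanest way to justify this is to work upstairs with cubes and Cartesian products (where distributivity of $\times$ over unions of closed subspaces is automatic), then push the identifications through the quotient map $\omega$ of \textbf{Remark \ref{om}} simultaneously in all factors. Since everything in sight is compact Hausdorff and the subspaces involved are CW subcomplexes, the quotient map is closed, and the resulting continuous bijection between compact Hausdorff spaces is a homeomorphism. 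With this justified, the termwise matching in Step~2 is routine, and taking the union in Step~3 is immediate from the definition of $K(J_i)$.
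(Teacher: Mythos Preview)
Your proposal is correct and follows essentially the same approach as the paper: both arguments split the $i$-th slot via $D^{j_i+1}\cong D^{j_i}\wedge D^1$ and $S^{j_i}\cong (D^{j_i}\wedge S^0)\cup(S^{j_i-1}\wedge D^1)$, distribute over the union, and match the resulting three families of pieces with the maximal simplices of $K(J_i)$. If anything, you are more careful than the paper, which simply writes the distributivity step as a ``$\cong$'' without comment; your justification via the cube model upstairs and the compact--Hausdorff closed-map argument is exactly the right way to make that step rigorous.
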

\begin{proof} The polyhedral smash product $\widehat{Z}(K;(\underline{D^{J+1}},\underline{S^{J}}))$ is defined as follows
	\begin{align*}
		\widehat{Z}(K;(\underline{D^{J+1}},\underline{S^{J}})) & = \widehat{Z}(K;\{(D^{j_1+1},S^{j_1}),\cdots,(D^{j_i+1},S^{j_i}),\cdots, (D^{j_m+1},S^{j_m})\}) \\
		& = \bigcup_{\sigma\in K}\left( \widehat{D}(\sigma) \right) \quad \text{as a subspace of}\  \bigwedge_{\ell=1}^m D^{j_\ell+1}\cong \widetilde{D}^{j_1+\cdots+j_m+m}\\
		& = \bigcup_{\sigma\in K} \left( \bigwedge_{\ell=1}^m Y_\ell\right),\ \text{where}\ Y_\ell\ \text{depends on}\ \sigma\ \text{and is defined in}\ (\ref{dhat})  \\
		& = \bigcup_{\sigma\in K, i\in \sigma} \left( \bigwedge_{\ell=1}^m Y_\ell\right)  \cup \bigcup_{\sigma\in K, i\notin \sigma} \left( \bigwedge_{\ell=1}^m Y_\ell\right) \\
		& = \bigcup_{\sigma\in K, i\in \sigma} \left( Y_1\wedge\cdots\wedge D^{j_i+1}\wedge\cdots \wedge Y_m\right)  \cup \bigcup_{\sigma\in K, i\notin \sigma} \left( Y_1\wedge\cdots\wedge S^{j_i}\wedge\cdots\wedge Y_m\right)  \\
		& \cong \bigcup_{\sigma\in K, i\in \sigma} \left( Y_1\wedge\cdots\wedge (D^{j_i}\wedge D^1)\wedge\cdots Y_m\right)  \\
		& \quad \cup \bigcup_{\sigma\in K, i\notin \sigma} \left( Y_1\wedge\cdots\wedge (D^{j_i}\wedge S^0\cup S^{j_i-1}\wedge D^1)\wedge\cdots Y_m\right)  \\
		& = \bigcup_{\sigma\in K, i\in \sigma} \left( Y_1\wedge\cdots\wedge D^{j_i}\wedge D^1\wedge\cdots Y_m\right) \cup \bigcup_{\sigma\in K, i\notin \sigma} \left( Y_1\wedge\cdots\wedge  D^{j_i}\wedge S^0\wedge\cdots Y_m\right)  \\
		& \quad \cup \bigcup_{\sigma\in K, i\notin \sigma} \left( Y_1\wedge\cdots\wedge S^{j_i-1}\wedge D^1\wedge\cdots Y_m\right) \\
%		& = \bigcup_{\sigma\in K, i\in \sigma} \left( Y_1\wedge\cdots\wedge D^{(j_i-1)+1}\wedge D^1\wedge\cdots Y_m\right)  \cup \bigcup_{\sigma\in K, i\notin \sigma} \left( Y_1\wedge\cdots\wedge S^{j_i-1}\wedge D^1\wedge\cdots Y_m\right) \\
%		& \quad \cup \bigcup_{\sigma\in K, i\notin \sigma} \left( Y_1\wedge\cdots\wedge  D^{(j_i-1)+1}\wedge S^0\wedge\cdots Y_m\right) \\
		& = \bigcup_{\sigma\in K, i\in \sigma} \widehat{D}\left( (\sigma\backslash\{i\})\cup\{i_a, i_b\}\right) \cup \bigcup_{\sigma\in K, i\notin \sigma} \widehat{D}\left( \sigma\cup\{i_a\}\right) \cup \bigcup_{\sigma\in K, i\notin \sigma} \widehat{D}\left( \sigma\cup\{i_b\}\right)\\
		& = \widehat{Z}\left( K(J_i);\{(D^{j_1+1},S^{j_1}),\cdots,(D^{(j_i-1)+1},S^{j_i-1}),(D^1,S^0),\cdots, (D^{j_m+1},S^{j_m})\}\right)  \\
		& = \widehat{Z}(K(J_i);(\underline{D^{J'+1}},\underline{S^{J'}})),
	\end{align*}
where $J'$ is the $(m+1)$-tuple $J'=(j_1,\cdots,j_i-1,0,\cdots,j_m)$.
\end{proof}
\begin{example}\label{ex64}\text{\ }
	
\begin{enumerate}
	\item For $m=3$, consider $K=\{\{1,2\},\{3\},\text{their subsets}\}$ and $J=(1,1,0)$. We have 
	\begin{align*}
				\widehat{Z}(K;(\underline{D^{J+1}},\underline{S^{J}})) & = \widehat{Z}(K;\{(D^{2},S^{1}),(D^{2},S^{1}), (D^{1},S^{0})\}) \\
				& = \widehat{D}(\{1,2\})\cup \widehat{D}(\{3\}) \quad \text{as a subspace of}\  D^2\wedge D^2\wedge D^1 \\
				& = D^2\wedge D^2 \wedge S^0 \cup S^1\wedge S^1 \wedge D^1 \\
				& \cong D^2\wedge (D^1\wedge D^1)\wedge S^0 \cup S^1\wedge (D^1\wedge S^0 \cup S^0 \wedge D^1)\wedge D^1 \\ 
				& =  \left( D^2\wedge D^1\wedge D^1\wedge S^0\right)  \cup \left( S^1\wedge D^1\wedge S^0\wedge D^1\right) \cup \left( S^1\wedge S^0 \wedge D^1\wedge D^1\right) \\
				& = \widehat{D}((\{1,2_a,2_b\})\cup\widehat{D}(\{2_a,3\})\cup\widehat{D}(\{2_b,3\}) \\
				& = \widehat{Z}(K(J_2);\{(D^{2},S^{1}),(D^{1},S^{0}),(D^{1},S^{0}), (D^{1},S^{0})\}) \\
				& = \widehat{Z}(K(J_2);(\underline{D^{J_1+1}},\underline{S^{J_1}})),\ \text{with}\ J_1=(1,0,0,0)\\
				& = \widehat{Z}(K(J_2)(J_1);(D^{1},S^{0}))\\
				& = \widehat{Z}(K(J);(D^{1},S^{0})),\ \text{where}\ K(J)=K(J_2)(J_1).
			\end{align*} 
	See \textsc{Figure} \ref{e651}.
	\begin{figure}[h]
		\begin{center}
			\includegraphics[scale=.8]{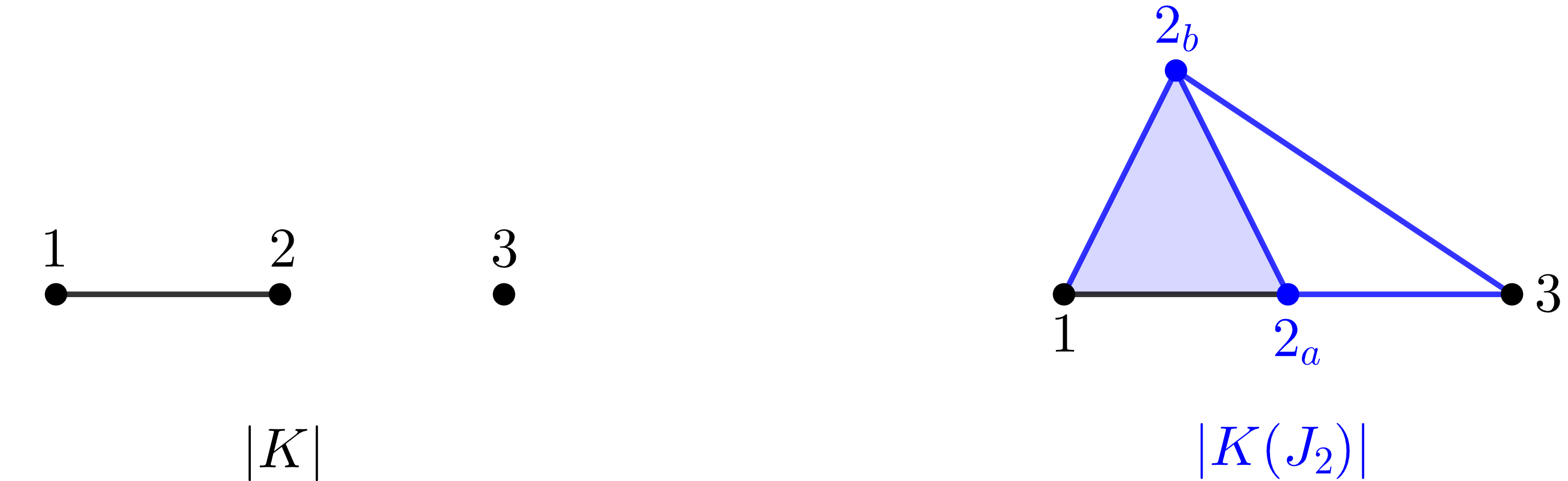}
		\end{center}
		\caption{$K(J_2) = \{\{1, 2_a, 2_b\}, \{2_a,3\}, \{2_b,3\}, \text{their subsets}\}$.}
		\label{e651}
	\end{figure}
	\item For $m=3$, consider $K=\{\{1,2\},\{1,3\},\{2,3\},\text{their subsets}\}$ and $J=(2,1,1)$. We have
	 \begin{align*}
					\widehat{Z}(K;(\underline{D^{J+1}},\underline{S^{J}})) & = \widehat{Z}(K;\{(D^{3},S^{2}),(D^{2},S^{1}), (D^{2},S^{1})\}) \\
					& \cong \widehat{Z}(K(J_1);(\underline{D^{J'+1}},\underline{S^{J'}})),\ \text{where}\ J'=(1,0,1,1).
				\end{align*} 
	With $K(J_1) = \{\{1_a, 1_b, 2\}, \{1_a,1_b,3\}, \{1_a,2,3\},\{1_b,2,3\},  \text{their subsets}\}$, which is a simplicial complex with $m+1=4$ vertices.
\end{enumerate}
\end{example}

The second intermediate result in given by the following lemma, which states that the geometric realization of the simplicial complex $K(J_i)$ can be obtained just by considering a single suspension of the geometric realization of the simplicial complex $K$.
\begin{lemma}\label{Jisus} For any $i\in [m]$, we have
\begin{equation*}
		|K(J_i)|\cong \Sigma |K|.
\end{equation*}
\end{lemma}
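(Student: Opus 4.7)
The plan is to construct an explicit homeomorphism $\Phi : |K(J_i)| \to \Sigma|K|$, identifying the target with the simplicial suspension $|K \ast \{s_+, s_-\}|$ (where $\{s_+, s_-\}$ is the two-vertex simplicial complex with no edge). A point of $|K(J_i)|$ is a tuple of barycentric coordinates on the vertices $([m]\setminus\{i\})\cup\{i_a,i_b\}$, summing to $1$ and supported on a simplex of $K(J_i)$; a point of $|K \ast \{s_+, s_-\}|$ is a tuple on $[m]\cup\{s_+,s_-\}$ summing to $1$ and subject to $t_+\cdot t_- = 0$ (since $\{s_+,s_-\}$ is not a simplex of the join).

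I would define $\Phi$ by modifying only the pair $(t_{i_a}, t_{i_b})$ into a triple $(t_i, t_+, t_-)$ via
\[
t_i := 2\min(t_{i_a}, t_{i_b}),\qquad t_+ := (t_{i_a} - t_{i_b})_+, \qquad t_- := (t_{i_b} - t_{i_a})_+,
\]
leaving every other coordinate unchanged. The identity $2\min(a,b)+|a-b|=a+b$ gives $t_i+t_++t_-=t_{i_a}+t_{i_b}$, so the sum-to-$1$ condition is preserved, and $t_+ t_- = 0$ by construction. The inverse is the affine map $t_{i_a}=\tfrac12 t_i+t_+$, $t_{i_b}=\tfrac12 t_i+t_-$, so both $\Phi$ and $\Phi^{-1}$ are manifestly continuous.

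The substantive step, and the main obstacle, is verifying that $\Phi$ sends simplex-supports of $K(J_i)$ to simplex-supports of $K \ast \{s_+, s_-\}$, and vice versa. This splits into cases matching the three defining families of $K(J_i)$. If the input support contains both $i_a$ and $i_b$, it equals $\tau \cup \{i_a, i_b\}$ for some $\tau$ with $\tau \cup \{i\} \in K$, so the image is supported on $\tau \cup \{i\}$ or $\tau \cup \{i\} \cup \{s_\pm\}$, both simplices of $K \ast \{s_+, s_-\}$; if the input contains only $i_a$ (say), it equals $\sigma \cup \{i_a\}$ with $\sigma \in K$ and $i \notin \sigma$, and the image has support $\sigma \cup \{s_+\}$; the $i_b$-case is symmetric. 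The $\Phi^{-1}$-direction splits analogously according to whether $t_+ > 0$, $t_- > 0$, or both vanish, and lands in $K(J_i)$ by the same three-family definition. Once this bookkeeping is in place, $\Phi$ is a continuous bijection between compact Hausdorff spaces, hence a homeomorphism, giving $|K(J_i)| \cong \Sigma|K|$. A more conceptual alternative would be to decompose both sides as topological pushouts along the link $L = \{\tau : \tau \cup \{i\} \in K,\ i \notin \tau\}$ and deletion $K_0 = \{\sigma \in K : i \notin \sigma\}$, writing $|K| \cong |K_0| \cup_{|L|} C|L|$ and, after an analogous three-piece decomposition of $K(J_i)$ into $K_0 \ast \{i_a\}$, $K_0 \ast \{i_b\}$, and the simplicial join of $L$ with the edge $[i_a, i_b]$, $|K(J_i)| \cong \Sigma|K_0| \cup_{\Sigma|L|} C\Sigma|L|$; one then invokes that $\Sigma = -\ast S^0$ preserves pushouts of subcomplex inclusions and that $\Sigma CX \cong C\Sigma X$ by associativity of join.
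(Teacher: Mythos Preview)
Your proof is correct and takes a genuinely different route from the paper's. The paper works from the $\Sigma|K|$ side: it writes $\Sigma|K|=\{s_1,s_2\}\ast\bigl(\bigcup_{\sigma\in K}|\sigma|\bigr)$, distributes the join over the union, splits into the cases $i\in\sigma$ and $i\notin\sigma$, and then replaces each piece $\{s_1,s_2\}\ast|\sigma|$ by the corresponding simplex of $K(J_i)$ (namely $|(\sigma\setminus\{i\})\cup\{s_1,s_2\}|$ when $i\in\sigma$, and $|\sigma\cup\{s_1\}|\cup|\sigma\cup\{s_2\}|$ when $i\notin\sigma$), finally relabeling $s_1,s_2$ as $i_a,i_b$. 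This is quick but leaves the compatibility of the simplex-by-simplex homeomorphisms implicit---the $\cong$ step is doing real work that is not spelled out.

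Your explicit barycentric formula $(t_{i_a},t_{i_b})\mapsto\bigl(2\min(t_{i_a},t_{i_b}),\,(t_{i_a}-t_{i_b})_+,\,(t_{i_b}-t_{i_a})_+\bigr)$ sidesteps that issue entirely: it is a single global piecewise-linear bijection, and your case analysis on supports is exactly what verifies that it lands in the right subcomplex on each side. This buys you a fully rigorous argument with no hidden gluing checks, at the cost of a slightly longer write-up. The pushout alternative you sketch at the end is closer in spirit to the paper's decomposition, but again more carefully stated. Either of your two approaches would be a fine replacement for the paper's proof.
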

\begin{proof} Set $S^0=\{s_1,s_2\}$ to be the $0$-sphere. We have
	\begin{align*}
		\Sigma |K| & = S^0 \ast |K| \\
						& = \{s_1,s_2\}\ast\left( \bigcup_{\sigma\in K} |\sigma|\right) \\
						& = \bigcup_{\sigma\in K} \left( \{s_1,s_2\}\ast |\sigma|\right) \\
						& = \left( \bigcup_{\sigma\in K, i\in \sigma} \left( \{s_1,s_2\}\ast |\sigma|\right) \right) \cup \left( \bigcup_{\sigma\in K, i\notin \sigma} \left( \{s_1,s_2\}\ast |\sigma|\right) \right) \\
						& \cong \left( \bigcup_{\sigma\in K, i\in \sigma} \left( |\sigma\backslash\{i\}\cup \{s_1,s_2\}|\right) \right) \cup \left( \bigcup_{\sigma\in K, i\notin \sigma} \left( |\sigma\cup\{s_1\}|\cup |\sigma\cup\{s_2\}| \right) \right) \\
						& = \left( \bigcup_{\sigma\in K, i\in \sigma} \left( |\sigma\backslash\{i\}\cup \{s_1,s_2\}|\right) \right) \cup \left( \bigcup_{\sigma\in K, i\notin \sigma} \left( |\sigma\cup\{s_1\}| \right) \right)\cup \left( \bigcup_{\sigma\in K, i\notin \sigma} \left( |\sigma\cup\{s_2\}| \right) \right) \\
						& \cong \left( \bigcup_{\sigma\in K, i\in \sigma} \left( |\sigma\backslash\{i\}\cup \{i_a,i_b\}|\right) \right) \cup \left( \bigcup_{\sigma\in K, i\notin \sigma} \left( |\sigma\cup\{i_a\}| \right) \right)\cup \left( \bigcup_{\sigma\in K, i\notin \sigma} \left( |\sigma\cup\{i_b\}| \right) \right) \\
						& =  \bigcup_{\tau\in K(J_i)} |\tau| \\
						& = |K(J_i)|. \qedhere
	\end{align*}
\end{proof}
The lemma is illustrated in \textsc{Figure} \ref{e66} for the simplicial complex $K$ from \textbf{Example \ref{ex64}}(1).
\begin{figure}[h]
		\begin{center}
			\includegraphics[scale=.8]{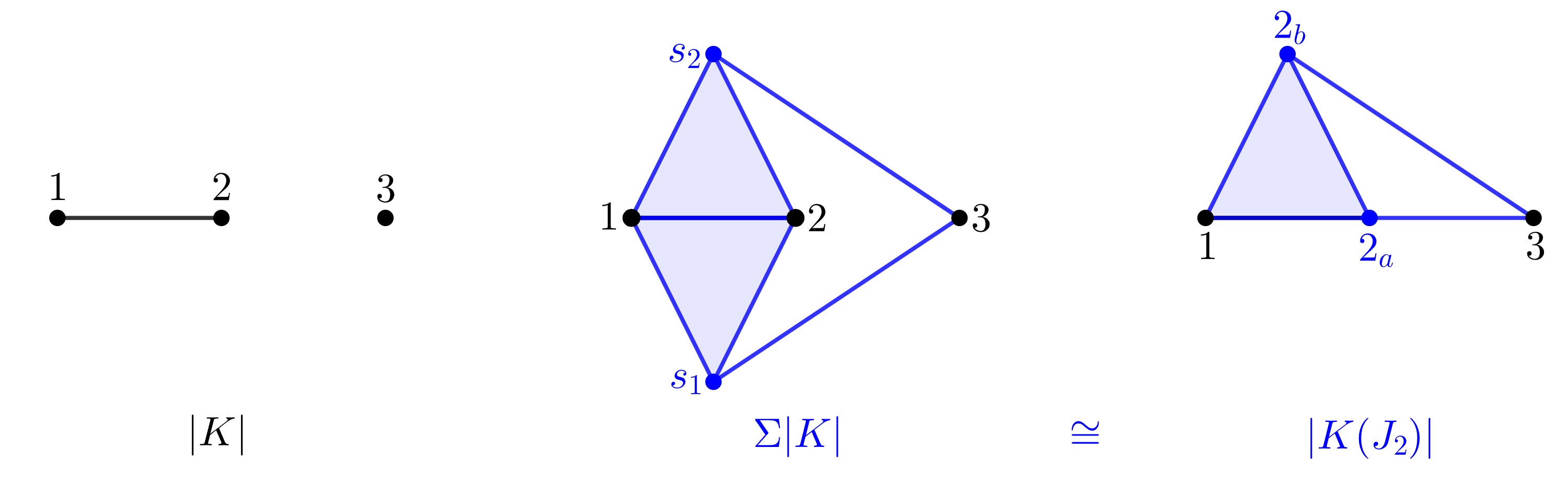}
		\end{center}
		\caption{$|K(J_2)|\cong \Sigma |K|$.}
		\label{e66}
	\end{figure}

Now we can state and prove the main result.

\begin{theorem}\label{gen} For any $m$-tuple $J=(j_1,\cdots,j_m)$ in $(\N\cup\{0\})^m$, there is a homeomorphism
	\begin{equation*}
		\widehat{Z}(K;(\underline{D^{J+1}},\underline{S^J}))\cong \Sigma^{j_1+\cdots+j_m+1} |K|.
	\end{equation*}
\end{theorem}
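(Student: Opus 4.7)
The plan is to prove the theorem by induction on the integer $N := j_1 + \cdots + j_m$, where the induction hypothesis is taken uniformly over all simplicial complexes $K$ (on any finite vertex set) and all tuples $J$ with that vertex count, summing to $N$. All the machinery is already in place: the base case is furnished by \textbf{Theorem \ref{zero}}, the dimension-reduction step by \textbf{Lemma \ref{Ji}}, and the adjustment to $|K|$ by \textbf{Lemma \ref{Jisus}}.

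For the base case $N=0$ we have $j_i = 0$ for every $i \in [m]$, so
\[
(\underline{D^{J+1}}, \underline{S^J}) = \bigl((D^1, S^0), \ldots, (D^1, S^0)\bigr),
\]
and \textbf{Theorem \ref{zero}} gives directly $\widehat{Z}(K;(\underline{D^{J+1}},\underline{S^J})) \cong \Sigma |K| = \Sigma^{0+1}|K|$, as required.

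For the inductive step, assume $N \geq 1$ and that the theorem holds for every pair $(K',J')$ with $\sum J' = N-1$. Since $N \geq 1$, there exists $i \in [m]$ with $j_i \neq 0$. Applying \textbf{Lemma \ref{Ji}} yields a homeomorphism
\[
\widehat{Z}(K;(\underline{D^{J+1}},\underline{S^J})) \cong \widehat{Z}(K(J_i);(\underline{D^{J'+1}},\underline{S^{J'}})),
\]
where $J' = (j_1, \ldots, j_i - 1, 0, \ldots, j_m)$ is an $(m+1)$-tuple with $\sum J' = N - 1$. The inductive hypothesis, applied to the simplicial complex $K(J_i)$ on $m+1$ vertices with the tuple $J'$, gives
\[
\widehat{Z}(K(J_i);(\underline{D^{J'+1}},\underline{S^{J'}})) \cong \Sigma^{N} |K(J_i)|.
\]
Finally, \textbf{Lemma \ref{Jisus}} supplies $|K(J_i)| \cong \Sigma |K|$, so chaining the homeomorphisms produces
\[
\widehat{Z}(K;(\underline{D^{J+1}},\underline{S^J})) \cong \Sigma^{N} \Sigma |K| = \Sigma^{N+1}|K| = \Sigma^{j_1+\cdots+j_m+1}|K|,
\]
completing the induction.

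There is no substantive obstacle; the only mild point of care is getting the inductive set-up right, namely that the statement must be quantified over all pairs $(K,J)$ at a given value of $N$, because \textbf{Lemma \ref{Ji}} alters \emph{both} the underlying simplicial complex (replacing $K$ by $K(J_i)$, with one extra vertex) and the tuple (decrementing one entry and appending a zero). Once the induction is organised this way, each inductive step is a single application of \textbf{Lemma \ref{Ji}} followed by \textbf{Lemma \ref{Jisus}}, and the bookkeeping of exponents in $\Sigma^{N}\cdot\Sigma = \Sigma^{N+1}$ matches exactly the drop of $\sum J$ by one. The choice of $i$ with $j_i \neq 0$ is immaterial, which is consistent with the symmetry on display in \textbf{Example \ref{ex64}}.
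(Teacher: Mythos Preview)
Your proof is correct and follows essentially the same approach as the paper: both use \textbf{Theorem \ref{zero}} as the base case and reduce to it via repeated applications of \textbf{Lemma \ref{Ji}} and \textbf{Lemma \ref{Jisus}}. The only cosmetic difference is that the paper phrases the reduction as applying each lemma $\sum_i j_i$ times in succession, whereas you organize the same iteration as a formal induction on $N=\sum_i j_i$ applying each lemma once per step.
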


\begin{proof}
	Applying \textbf{Lemma \ref{Ji}} $\sum_{i=1}^{m}j_i$ times, we get
	\begin{equation}\label{gen1}
		\widehat{Z}(K;(\underline{D^{J+1}},\underline{S^{J}})) \cong \widehat{Z}(K(J);(\underline{D^{1}},\underline{S^{0}})),
	\end{equation}
where $K(J)$ is a simplicial complex obtained by applying the basic move (doubling a single vertex) $\sum_{i=1}^{m}j_i$ times. By the base case $k=0$ in \textbf{Theorem \ref{zero}}, we have 
\begin{equation}\label{gen2}
	\widehat{Z}(K(J);(\underline{D^{1}},\underline{S^{0}}))\cong \Sigma |K(J)|.
\end{equation}

Finally, by applying \textbf{Lemma \ref{Jisus}} $\sum_{i=1}^{m}j_i$ times, we have
\begin{align}\label{gen3}
	\Sigma |K(J)| & \cong \Sigma\Sigma^{j_1+\cdots+j_m} |K| \nonumber \\
						& = \Sigma^{j_1+\cdots+j_m+1} |K|.
\end{align}

Therefore by putting equations $(\ref{gen1}), (\ref{gen2})$ and $(\ref{gen3})$ together, we obtain\\
\[\widehat{Z}(K;(\underline{D^{J+1}},\underline{S^J}))\cong \Sigma^{j_1+\cdots+j_m+1} |K|. \qedhere \]
\end{proof}
	\bibliographystyle{plain}
	\bibliography{MyBib}
	
\end{document}